\newtheorem{theorem}{Theorem}[section]
\newtheorem{cor}[theorem]{Corollary}
\newtheorem{lem}[theorem]{Lemma}
\newtheorem{prop}[theorem]{Proposition}
\theoremstyle{definition}
\newtheorem{defn}[theorem]{Definition}
\theoremstyle{remark}
\newtheorem{rem}[theorem]{Remark}
\newcommand{\HH}{\mathrm{H}}
\newcommand{\End}{\mathrm{End}}
\newcommand{\Hom}{\mathrm{Hom}}
\newcommand{\bA}{\mathbb{A}}
\newcommand{\bP}{\mathbb{P}}
\newcommand{\bQ}{\mathbb{Q}}
\newcommand{\bC}{\mathbb{C}}
\newcommand{\cA}{\mathcal{A}}
\newcommand{\cO}{\mathcal{O}}
\newcommand{\cE}{\mathcal{E}}
\newcommand{\cF}{\mathcal{F}}
\newcommand{\cG}{\mathcal{G}}
\newcommand{\cH}{\mathcal{H}}
\newcommand{\cK}{\mathcal{K}}
\newcommand{\cL}{\mathcal{L}}
\newcommand{\cM}{\mathcal{M}}
\newcommand{\cN}{\mathcal{N}}
\newcommand{\cX}{\mathcal{X}}
\newcommand{\arrow}{\rightarrow}
\newcommand{\rk}{\mathrm{rk}\,}
\newcommand{\Proj}{\mathrm{Proj}}
\newcommand{\Sym}{\mathrm{Sym}}
\newcommand{\HIG}{\mathrm{HIG}}
\newcommand{\rH}{\mathrm{H}}
\title[]{An algebraic approach to the hyperbolicity of moduli stacks of Calabi-Yau varieties}
\author{Yohan Brunebarbe}
\address{Institut de Math\'ematiques de Bordeaux, Universit\'e de Bordeaux, 351 cours de la Lib\'eration, F-33405 Talence}
\email{yohan.brunebarbe@math.u-bordeaux.fr}
\date{\today}
\begin{document}
\maketitle
\begin{abstract}
The moduli stacks of Calabi-Yau varieties are known to enjoy several hyperbolicity properties. The best results have so far been proven using sophisticated analytic tools such as complex Hodge theory. Although the situation is very different in positive characteristic (e.g. the moduli stack of principally polarized abelian varieties of dimension at least 2 contains rational curves), we explain in this note how one can prove many hyperbolicity results by reduction to positive characteristic, relying ultimately on the nonabelian Hodge theory in positive characteristic developed by Ogus and Vologodsky. 
\end{abstract}

\section{Introduction}

\subsection{Main result}

The goal of this note is to provide an algebraic proof of the following result.

\begin{theorem}\label{main result}
Let $S$ be a smooth proper connected complex variety. Let $f: X \arrow S$ be a smooth projective morphism with connected fibres. Assume that the relative canonical bundle $\omega_{X/S}$ is relatively trivial and that the Hodge line bundle $f_\ast(\omega_{X/S})$ is big. Then:
\begin{enumerate}
\item the canonical bundle of $S$ is big (in other words $S$ is of general type),
\item the cotangent bundle of $S$ is big in the sense of Hartshorne,
\item the cotangent bundle of $S$ is weakly positive.
\end{enumerate} 
\end{theorem}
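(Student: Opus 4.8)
The plan is to prove all three assertions by reduction to characteristic $p$, where the key positivity statement about the Higgs bundle attached to the Gauss--Manin connection is established via the Cartier transform of Ogus--Vologodsky and then fed into a Viehweg--Zuo type construction. First I would spread out the whole situation $f\colon X\arrow S$, the relative triviality of $\omega_{X/S}$, and the bigness of $\cL:=f_\ast\omega_{X/S}$, over a finitely generated $\bZ$-subalgebra $R\subset\bC$; after shrinking $\Spec R$ one gets $f_R\colon X_R\arrow S_R$ smooth projective with $S_R$ smooth proper, $\omega_{X_R/S_R}$ relatively trivial, and $\cL_R$ relatively big over $\Spec R$. Because bigness of a line bundle, bigness in the sense of Hartshorne, and weak positivity of a vector bundle are all constructible in the relevant sense (semicontinuity of cohomology plus Noetherian induction on $\Spec R$), it suffices to prove (1)--(3) for the fibres $S_0:=S_s$ over a dense set of closed points $s$, so $S_0$ is smooth proper over a perfect field $k$ of characteristic $p$, and for all but finitely many $s$ we may assume $p>\dim X/S=:n$ and that $S_0$ lifts to $W_2(k)$.

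Over $S_0$ I would form the de Rham bundle $(\cH,\nabla):=(R^nf_{0\ast}\Omega^\bullet_{X_0/S_0},\nabla_{GM})$ with its Hodge filtration $F^\bullet$ (satisfying Griffiths transversality), so that $F^n\cH=f_{0\ast}\omega_{X_0/S_0}=\cL_0$ is still big, and pass to the associated graded Higgs bundle $(E,\theta)=\bigoplus_{j=0}^n E^{n-j,j}$ with $E^{n-j,j}=R^jf_{0\ast}\Omega^{n-j}_{X_0/S_0}$ and Higgs field $\theta^{(j)}\colon E^{n-j,j}\to E^{n-j-1,j+1}\otimes\Omega^1_{S_0}$. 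Relative triviality of $\omega_{X_0/S_0}$ gives $\Omega^{n-j}_{X_0/S_0}\cong\wedge^jT_{X_0/S_0}\otimes f_0^\ast\cL_0$, hence $E^{n-j,j}\cong R^jf_{0\ast}(\wedge^jT_{X_0/S_0})\otimes\cL_0$, and $\theta^{(0)}$ is the Kodaira--Spencer map tensored with $\cL_0$; since $\cL_0$ is big the family has maximal variation, so $\theta^{(0)}\neq 0$.

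The main point, and the step I expect to be the hard one, is to establish that $(E,\theta)$ is $\theta$-semistable of slope $0$ for every polarization of $S_0$ --- equivalently, that $\ker\theta^{(\ell)}$ (and every subsheaf of it) is semi-negative in $E^{n-\ell,\ell}$, its dual being weakly positive. This is exactly where one cannot argue as in characteristic zero (the moduli stack contains rational curves) and must instead exploit $p$-adic nonabelian Hodge theory: since $p>n$ and $S_0$ lifts mod $p^2$, the inverse Cartier transform $C^{-1}$ of Ogus--Vologodsky is available, and the de Rham bundle of a family of varieties is, up to Frobenius twist, periodic for the Higgs--de Rham flow built from $C^{-1}$ (this encodes the crystalline nature of $R^nf_\ast$). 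A maximal destabilizing sub-Higgs-sheaf of $(E,\theta)$ would be carried along the flow with its slope multiplied by $p$ at each step, forcing that slope to be $0$ once $p$ is large; restricting to curves in $S_0$ and invoking boundedness (Langer) to make "$p$ large" uniform yields semistability for all polarizations, hence the stated semi-negativity. This is the characteristic-$p$ incarnation of Fujita--Kawamata--Zucker semi-positivity of Hodge bundles, and the delicate parts are the uniformity in $p$ and the passage from the curve case, together with the bookkeeping of Paragraph~1 needed to move positivity between characteristics.

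Finally I would run the Viehweg--Zuo construction. Let $\cG=\bigoplus_{j\ge 0}\cG^{n-j,j}\subseteq E$ be the sub-Higgs-sheaf generated by $\cL_0\subseteq E^{n,0}$, with $\cG^{n,0}=\cL_0$ and $\cG^{n-j-1,j+1}$ the image of the contraction of $\theta^{(j)}|_{\cG^{n-j,j}}$, and let $\ell\ge 1$ be maximal with $\cG^{n-\ell,\ell}\neq 0$, so $\cG^{n-\ell,\ell}\subseteq\ker\theta^{(\ell)}$ and $1\le\ell\le\min(n,\dim S_0)$. Iterating $\theta$ gives a nonzero map $\cL_0\to\cG^{n-\ell,\ell}\otimes\Sym^\ell\Omega^1_{S_0}$; choosing an invertible quotient $\cQ$ of the image yields a nonzero map $\cL_0\otimes\cQ^{-1}\to\Sym^\ell\Omega^1_{S_0}$. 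By the previous paragraph $\cQ$ is a quotient of a subsheaf of $\ker\theta^{(\ell)}$, so $\cQ^{-1}$ is weakly positive; hence $\cL_0\otimes\cQ^{-1}$ is big and $\Sym^\ell\Omega^1_{S_0}$ contains a big subsheaf. Standard arguments --- supplemented by a Bogomolov--Sommese-type vanishing in characteristic $p$, again obtained through the Cartier transform --- then upgrade this to the bigness in the sense of Hartshorne of $\Omega^1_{S_0}$ and the bigness of $\omega_{S_0}=\det\Omega^1_{S_0}$, while refining the construction to produce Viehweg--Zuo subsheaves in all symmetric powers gives the weak positivity of $\Omega^1_{S_0}$. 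Undoing the spreading-out of Paragraph~1 then yields (1)--(3) over $\bC$.
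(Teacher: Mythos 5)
Your overall strategy is to reduce the three conclusions themselves to positive characteristic, prove them there for the fibres $S_0 = S_s$, and then ``undo the spreading-out''. This route has a fatal obstruction at both ends. First, the conclusions of the theorem are false in characteristic $p$: Moret-Bailly constructed non-isotrivial abelian schemes $X \arrow \bP^1$ with $f_\ast\omega_{X/\bP^1} = \cO_{\bP^1}(p)\oplus\cO_{\bP^1}(-1)$, so the Hodge line bundle is big and $\omega_{X/\bP^1}$ is relatively trivial, yet $\bP^1$ is not of general type and $\Omega^1_{\bP^1}$ is not weakly positive (Remark 1.9 of the paper). The intermediate statements you invoke in characteristic $p$ --- semi-negativity of $\ker\theta$ over all of $S_0$, weak positivity of $\cQ^{-1}$, a ``Bogomolov--Sommese-type vanishing'' --- fail for the same reason: slope-semistability of the mod-$p$ Higgs bundle with respect to a polarization (which the Higgs--de Rham flow does give for $p\gg 0$ when the family lifts) says nothing about its restriction to curves of $S_0$ that do not lift to characteristic zero, and it is exactly such curves (Moret-Bailly's $\bP^1\arrow \cA_{2}$) that carry the counterexamples. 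Second, even granting the char-$p$ statements, the transfer back to $\bC$ goes the wrong way: semicontinuity gives $h^0(S_s,\omega^{\otimes m}_{S_s})\geq h^0(S,\omega_S^{\otimes m})$, so bigness, Hartshorne-bigness or weak positivity at a dense set of closed fibres does not imply the same at the geometric generic fibre; your appeal to constructibility does not close this, because the dense opens on which $h^0(\omega^{\otimes m})$ is constant depend on $m$.

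The paper instead quarantines positive characteristic into a single characteristic-zero statement proved by reduction mod $p$: the semistability of $\rH^k_{Dol}(X/C)$ for $C$ a smooth projective \emph{complex} curve (Theorem 1.4, quoted from Ogus--Vologodsky). Everything else happens over $\bC$: semistability plus $c_1=0$ yields weak positivity of $\det\cF^\vee$ for Higgs subsheaves and of $\cF^\vee$ for subsheaves of $\ker\theta$ (Proposition 4.3, Corollary 4.4); Andreotti's lemma together with the moduli stack $\cM_P$ upgrades the bigness of $f_\ast\omega_{X/S}$ to the \emph{injectivity} of $T_S\otimes\cE^d\arrow\cE^{d-1}$ (Proposition 3.2), which is what gives weak positivity of all of $\Omega^1_S$ via $T_S\hookrightarrow\ker\Theta\subset\End(\cE)$ --- your proposal only records $\theta^{(0)}\neq 0$, which is not enough for that step; and the determinant of the exact sequence $0\arrow\cN_2\arrow\Sym^kT_S\arrow\cN_1\otimes(\cE^d)^\vee\arrow 0$ gives the bigness of $\omega_S$ directly, with no vanishing theorem. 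Your final Viehweg--Zuo iteration is close in spirit to this last step, but it is run in the wrong characteristic and with missing inputs.
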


The definitions of bigness and weak-positivity are recalled in section \ref{background material}. Observe that the assumptions of the theorem are satisfied for example if $X \arrow S$ is an abelian scheme of maximal variation. Our proof of Theorem \ref{main result} easily extends to the logarithmic setting.\\

Theorem \ref{main result} (and its extension to the logarithmic setting) is not new: it can easily be deduced from general results on variations of Hodge structures \cite{GriffithsIII, Zuo_neg, BKT, Bruni_Crelle, Bruni-Cado, Bruni_semipositivity}, using that Calabi-Yau manifolds satisfy the infinitesimal Torelli property (cf. section \ref{families of Calabi-Yau varieties}). The goal of this note is to present a proof which does not use any differential geometric notions, such as metrics and curvature computations, and relies ultimately on the nonabelian Hodge theory in positive characteristic developed by Ogus and Vologodsky \cite{Ogus-Vologodsky}.\\

We give also an algebraic proof of the following higher-dimensional version of the Arakelov inequality \cite{Arakelov, Faltings-Arakelov, Peters-Rigidity, Kim-ABC, Jost-Zuo, peters2000arakelovtype, Viehweg-Arakelov, Zuo_neg}.
\begin{theorem}[Arakelov inequality]\label{Arakelov inequality}
Same assumptions as Theorem \ref{main result}. If $n$ is the dimension of $S$ and $d$ is the relative dimension of $f$, then 
\[ [f_\ast(\omega_{X/S})] \leq \frac{1}{n+d} \cdot \dbinom{n+d}{n+1} \cdot [\omega_S] . \]
\end{theorem}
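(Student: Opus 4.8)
The plan is to run the same strategy as for Theorem~\ref{main result}: spread out $f\colon X\to S$ over a finitely generated $\mathbb{Z}$-algebra and reduce modulo $p$, then use Ogus--Vologodsky's nonabelian Hodge theory; the novelty relative to Theorem~\ref{main result} is that one must keep track of first Chern classes and not merely of the existence of positive subsheaves. I would begin by reducing the statement to an inequality of degrees on curves: since $S$ is projective, a class in $N^1(S)_{\mathbb R}$ is pseudo-effective precisely when it pairs nonnegatively with the class of a general member of every covering family of curves (Boucksom--Demailly--Paun--Peternell), and such intersection numbers survive spreading out and specialisation; so it suffices, for a general complete intersection curve $C\subseteq S$ and for a sufficiently general reduction $S_0$ over a field $k$ of characteristic $p$, to prove
\[ (n+d)\cdot\deg_C f_{0\ast}(\omega_{X_0/S_0})\ \leq\ \binom{n+d}{n+1}\cdot\deg_C\omega_{S_0}. \]
The relevant object on $C$ is the graded Higgs bundle $(\cE_0,\theta_0)=\big(\bigoplus_{p+q=d}R^qf_{0\ast}\Omega^p_{X_0/S_0},\theta_{\mathrm{KS}}\big)$ of the weight-$d$ de Rham cohomology \emph{restricted to $C$}, whose Higgs field takes values in $\Omega^1_{S_0}|_C$ and not in $\Omega^1_C$ — this is why $\deg_C\omega_{S_0}=\deg(\Omega^1_{S_0}|_C)$ will appear. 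Its top graded piece is the line bundle $\cE_0^{d,0}\cong f_{0\ast}\omega_{X_0/S_0}=:L_0$; relative Serre duality identifies the bottom piece $\cE_0^{0,d}$ with $L_0^{-1}$; and the Calabi--Yau (iterated) infinitesimal Torelli property, valid for $p\gg0$, makes the $d$-fold iterate of $\theta_0$ a nonzero, hence injective, map $L_0\hookrightarrow\cE_0^{0,d}\otimes\Sym^d\Omega^1_{S_0}\cong L_0^{-1}\otimes\Sym^d\Omega^1_{S_0}$, that is, an inclusion of sheaves $L_0^{\otimes2}\hookrightarrow\Sym^d\Omega^1_{S_0}$, together with a compatible chain of such inclusions through the intermediate pieces $\cE_0^{d-k,k}\otimes\Sym^k\Omega^1_{S_0}$.

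The positivity is then fed in from the machinery of Theorem~\ref{main result}: weak positivity of $\Omega^1_{S_0}$ — so that every quotient of $\Sym^k\Omega^1_{S_0}$ has pseudo-effective determinant and $\Omega^1_{S_0}|_C$ has nonnegative minimal slope for very general $C$ — together with the finer semi-positivity properties of the Hodge bundles $\cE_0^{d-k,k}$ obtained through the inverse Cartier transform. Chasing the chain of inclusions above and taking determinants of the successive weakly positive quotient sheaves that arise — sheaves whose ranks and whose determinants (multiples of $[\omega_{S_0}]$ corrected by multiples of $[L_0]$) are governed by the binomial coefficients $\binom{n+k-1}{n-1}$ and $\binom{n+k-1}{n}$, $k=1,\dots,d-1$ — one obtains a system of numerical inequalities relating $\deg_C L_0$ to $\deg_C\omega_{S_0}$ whose optimal combination is the displayed inequality; the coefficient $\tfrac{1}{n+d}\binom{n+d}{n+1}=\tfrac{1}{n+1}\binom{n+d-1}{n}$ is precisely the combinatorial count coming from this bookkeeping of the Hodge filtration. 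Pulling the resulting numerical inequality back to characteristic zero and using the reformulation of the first paragraph completes the argument.

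The main obstacle, exactly as for Theorem~\ref{main result}, is producing the positivity statements in characteristic $p$ \emph{uniformly in $p$}: weak positivity of $\Omega^1_{S_0}$ and the semi-positivity of the Hodge bundles are extracted from Ogus--Vologodsky's inverse Cartier transform, and the subtle point is that pullback along Frobenius does not preserve semistability, so one needs boundedness results to obtain $p$-independent estimates — this is the technical heart of the method. A secondary, more bookkeeping-flavoured difficulty is arranging the chain of inclusions coming from the full Hodge filtration so as to land on the stated constant rather than on the weaker bound $[L]\le\tfrac12\binom{n+d-1}{n}[\omega_S]$ that already follows from the single inclusion $L^{\otimes2}\hookrightarrow\Sym^d\Omega^1_S$ together with weak positivity of $\Sym^d\Omega^1_S$.
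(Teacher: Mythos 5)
Your proposal diverges from the paper's architecture in a way that creates two genuine gaps. First, and most seriously, your chain of inclusions hinges on the $d$-fold iterate of the Kodaira--Spencer map being nonzero (your ``iterated infinitesimal Torelli property'', giving $L^{\otimes 2}\hookrightarrow \Sym^d\Omega^1_S$). This is not available: the hypotheses only yield injectivity of the \emph{first} iterate $\phi_1 : T_S\otimes \cE^d \to \cE^{d-1}$ (Lemma \ref{Infinitesimal Torelli} plus Proposition \ref{bigness of the Hodge bundle implies KS injective}), and the higher iterates --- the Yukawa couplings --- can vanish identically. The paper's proof is built precisely to avoid this: it lets $r$ be the \emph{largest} $k$ with $\phi_k\neq 0$ (knowing only $r\geq 1$), forms the Higgs subsheaf $\cG=\bigoplus_k \cG^{d-k}$ generated by $\cE^d$, uses $\det\cG\leq 0$ from semistability (Proposition \ref{semipositivityI for semistable}) together with weak positivity of $\Omega^1_S$ to get $(\cE^d)^{\otimes \rk\cG}\leq \omega_S^{\otimes\binom{n+r}{n+1}}$, and then converts the unknown $r$ into the stated constant via the rank estimate $\rk\cG\geq n+r$ and the monotonicity of $\frac{1}{n+r}\binom{n+r}{n+1}$ in $r$. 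That rank-versus-binomial trade-off, not a bookkeeping of a full chain of injections, is where the constant comes from; without it your argument only covers the (non-generic) case of maximally non-degenerate Yukawa coupling.

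Second, your framework of proving the positivity statements directly in characteristic $p$ (weak positivity of $\Omega^1_{S_0}$, semi-positivity of the Hodge bundles $\cE_0^{d-k,k}$ over a field of characteristic $p$) cannot work as stated: these statements are false in positive characteristic, as the paper's own remark recalls via Moret-Bailly's families of abelian surfaces over $\bP^1$ with $f_*\omega_{X/\bP^1}=\cO_{\bP^1}(p)\oplus\cO_{\bP^1}(-1)$. The role of characteristic $p$ in the paper is confined to a single black box --- Theorem \ref{semistability of Gauss-Manin Higgs bundles}, the semistability of the Gauss--Manin Higgs bundle over a \emph{characteristic-zero} curve, proved by Ogus--Vologodsky by reduction mod $p$ --- after which the entire positivity and determinant argument runs in characteristic zero using Viehweg's formalism. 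You correctly sense that Frobenius pullback and $p$-independence are delicate, but the resolution is not a uniformity-in-$p$ estimate for positivity of $\Omega^1_{S_0}$; it is that no such positivity in characteristic $p$ is ever needed or true.
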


In the theorem, $[f_\ast(\omega_{X/S})]$ and $[\omega_S]$ are the divisors classes associated to the line bundles $f_\ast(\omega_{X/S})$ and $\omega_S$ respectively, and for any two $\bQ$-divisors classes $A$ and $B$ the notation $A \leq B$ means that any line bundle representing (up to a positive rational number) the difference $B-A$ is weakly-positive.\\

In a previous paper, we proved with analytic techniques \cite[Theorem 1.6]{brunebarbe2020increasing} that the preceding inequality holds with $\frac{d}{2}$ instead of $\frac{1}{n+d} \cdot \dbinom{n+d}{n+1}$, but we were unable to prove it algebraically when the base is not a curve. Note that, in particular, the constant should only depend on $d$ but not on $n$.

\subsection{Examples}

Let $\cA_g$ be the moduli stack of principally polarized complex abelian varieties of dimension $g$, and $\pi : \bA_g \arrow \cA_g$ be the universal family. Since the Hodge line bundle $\pi_ \ast \omega_{\bA_g / \cA_g}$ is known to be ample on (the coarse moduli space of) $\cA_g$ (see \cite{Moret-Bailly85} for an algebraic proof), it follows from Theorem \ref{main result} that any smooth proper connected complex variety that admits a generically finite morphism to $\cA_g$ is of general type and has a cotangent bundle which is weakly-positive and big in the sense of Hartshorne. Thanks to the Torelli Theorem, this applies in particular to any smooth proper complex variety that admits a generically finite morphism to the moduli stack $\cM_g$ of smooth projective curve of genus $g > 1$.\\

Similarly, for any polynomial $P \in \bQ[T]$, let $\cM_P$ be the stack that classifies pairs $(X, \cL)$ consisting  in a smooth projective complex variety $X$ with a trivial canonical bundle and an ample line bundle $\cL$ on $X$ whose Hilbert polynomial equals $P$, up to the equivalence relation $(X, \cL) \equiv (X^\prime, \cL^\prime)$ if there exists an isomorphism $\mu : X \arrow X^\prime$ such that $\mu^\ast \cL^\prime$ is numerically equivalent to $\cL$. Then $\cM_P$ is a separated Deligne-Mumford stack of finite type over $\bC$ and its coarse moduli space is quasi-projective \cite{Viehweg_book}. If $\pi : \cX \arrow \cM_P$ denote the universal family, then the Hodge line bundle $\pi_\ast \omega_{\cX / \cM_P}$ is known to be ample on (the coarse moduli space of) $\cM_P$, cf. \cite[Theorem 7.8]{BBT} (however, besides the case of abelian varieties and K3 surfaces \cite{Maulik}, it seems that there is no algebraic proof of this fact). Therefore, any smooth proper connected complex variety that admits a generically finite morphism to the stack $\cM_P$ satisfies the conclusions of Theorem \ref{main result}.

\subsection{Semi-positivity from Higgs bundles of geometric origin}

Let $S$ be a smooth complex scheme. Let $\HIG(S)$ denote the Abelian category of Higgs bundles $(\cE, \theta)$ on $S$, where $\cE$ is a quasicoherent $\cO_S$-modules and $\theta : \cE \arrow \cE \otimes_{\cO_S} \Omega_S^1$ is a $\cO_S$-linear morphism such that $\theta \wedge \theta = 0$. For any smooth proper morphism $f : X \arrow S$ and any integer $k$, we write $\rH^k_{Dol}(X/S)$ for the locally-free coherent $\cO_S$-module $\oplus_{p + q = k} \mathrm{R}^q f_\ast (\Omega_{X/S}^{p})$ endowed with the Gauss-Manin Higgs field. We say that a Higgs bundle on $S$ is of geometric origin if it belongs to the full subcategory of $\HIG(S)$ which contains the direct summands of the $\rH^k_{Dol}(X/S)$ for any smooth proper $S$-scheme $X$, and any extension of these factors.\\

The true mathematical content of this note consists in fact in explaining how one can reduce the proofs of Theorem \ref{main result} and Theorem \ref{Arakelov inequality} to the following result, using arguments from complex algebraic geometry.

\begin{theorem}[Beilinson-Deligne, Simpson]\label{semistability of Gauss-Manin Higgs bundles}
For any smooth proper morphism $f : X \arrow S$ with $S$ a smooth projective complex curve, and any integer $k$, the Higgs bundle $\rH^k_{Dol}(X/S)$ is semistable.
\end{theorem}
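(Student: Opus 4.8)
The plan is to identify $\rH^k_{Dol}(X/S)$ with the Higgs bundle underlying a polarizable complex variation of Hodge structure on $S$, and then to invoke the fact --- due to Simpson --- that such Higgs bundles are polystable of degree zero, hence in particular semistable.

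First I would recall that, since $f$ is smooth and proper, $\bV := R^k f_\ast \bC_X$ underlies a polarizable $\bZ$-variation of Hodge structure $(\cH, \nabla, F^\bullet)$ on the whole curve $S$ (the polarization coming from the cup-product pairing together with the primitive/Lefschetz decomposition; concretely $\bV$ is a finite direct sum of Tate twists of polarizable variations, and it suffices to treat one summand at a time). Here $\cH = \rH^k_{dR}(X/S)$ is the de Rham bundle with its Gauss--Manin connection $\nabla$, and $F^\bullet$ is the Hodge filtration, with $F^p/F^{p+1} \cong R^{k-p} f_\ast \Omega^p_{X/S}$ by (base change of) the degeneration of the Hodge-to-de Rham spectral sequence. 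By Griffiths transversality $\nabla$ maps $F^p$ into $F^{p-1} \otimes \Omega^1_S$, hence induces an $\cO_S$-linear map $\mathrm{gr}^p_F \cH \arrow \mathrm{gr}^{p-1}_F \cH \otimes \Omega^1_S$, i.e. a Higgs field on $\mathrm{gr}_F \cH$; unwinding the definitions, this graded Higgs bundle is exactly $\rH^k_{Dol}(X/S) = \oplus_{p+q=k} R^q f_\ast \Omega^p_{X/S}$ equipped with its Gauss--Manin Higgs field.

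Next, since the polarization on $\bV$ forces the Zariski closure of the monodromy to be reductive, the local system $\bV$ is semisimple (Deligne's semisimplicity theorem). By the Corlette--Simpson correspondence on the projective curve $S$, semisimple local systems correspond to polystable Higgs bundles with vanishing rational Chern classes; and the Higgs bundle attached to $\bV$ by this correspondence is precisely the graded object $\mathrm{gr}_F \cH$ of the variation --- this is the content of Simpson's theory of Higgs bundles arising from (complex) variations of Hodge structure. Therefore $\rH^k_{Dol}(X/S)$ is a polystable Higgs bundle of degree zero on $S$, and in particular it is semistable. (Tate twisting only relabels the grading and leaves the underlying Higgs bundle unchanged, so the reduction to polarizable summands above is harmless.)

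The only point requiring genuine care is the identification in the first step --- that the $\cO_S$-linear map induced by Gauss--Manin on $\mathrm{gr}_F$ of the de Rham bundle agrees with the Gauss--Manin Higgs field on $\oplus_{p+q=k} R^q f_\ast \Omega^p_{X/S}$ --- which is classical but should be spelled out; everything else is a direct appeal to Deligne's semisimplicity theorem and Simpson's correspondence. An alternative, should one wish to bypass the nonabelian Hodge correspondence, is to argue directly on $S$: a Higgs subsheaf of $\rH^k_{Dol}(X/S)$ violating semistability would, after saturation and using the nilpotence of the Higgs field, contradict the (semi)negativity of the curvature of the Hodge metric on the corresponding quotient; but this reintroduces exactly the analytic input that the rest of the note is designed to isolate in this single theorem.
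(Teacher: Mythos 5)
Your argument is correct, but it is not the route the paper takes: it is essentially Simpson's transcendental proof, which the paper explicitly credits in the sentence following the theorem (``later, in a greater generality, by Simpson using transcendental tools''). The chain you describe --- identify $\rH^k_{Dol}(X/S)$ with the associated graded, for the Hodge filtration, of the Gauss--Manin connection on the de Rham bundle; observe that $R^k f_\ast \bC$ underlies a polarizable variation of Hodge structure; apply Deligne's semisimplicity theorem; and invoke the Corlette--Simpson correspondence on a projective curve, under which the Higgs bundle attached to the variation is exactly this graded object --- does yield polystability of degree zero, hence semistability. The paper, by contrast, does not reprove the statement at all: it isolates it as the single imported input and, in keeping with its stated aim of avoiding metrics and curvature computations, refers for its proof to Ogus and Vologodsky's nonabelian Hodge theory in positive characteristic (their Theorem 4.21), i.e.\ an argument by spreading out and reduction modulo $p$. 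So the two approaches differ in precisely the respect the note cares about: yours rests on the existence of the harmonic (Hodge) metric underlying the nonabelian Hodge correspondence over $\bC$, which is the analytic input the paper is designed to excise, while the cited proof is algebraic; your fallback sketch via the curvature of the Hodge metric is the other (Griffiths) alternative the paper mentions in the proof of its Theorem on Higgs bundles of geometric origin. Two small points if you write your version up: the morphism $f$ is only assumed smooth and proper, so polarizability of $R^k f_\ast \bQ$ cannot be obtained purely from the relative Lefschetz package on the fibres and needs a word (e.g.\ dominate by a smooth projective family via Chow's lemma and resolution, and realize the cohomology of each fibre as a direct factor); and the identification of the graded Gauss--Manin connection with the Gauss--Manin Higgs field of Katz--Oda, which you rightly flag, is the one step that genuinely has to be checked rather than cited.
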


This result was first proved by Beilinson-Deligne (unpublished) and later, in a greater generality, by Simpson using transcendental tools. An algebraic proof was then given by Ogus and Vologodsky, as a consequence of their nonabelian Hodge theory in positive characteristic, cf. \cite[Theorem 4.21]{Ogus-Vologodsky}.\\

A key ingredient of our proof of Theorem \ref{main result} is given by the following technical result.

\begin{theorem}\label{semipositivityI}
Le $S$ be a smooth proper complex variety and $(\cE , \theta)$ be a Higgs bundle on $S$ of geometric origin. If $(\cF, \theta) \subset (\cE, \theta)$ is a Higgs subsheaf, then the line bundle $\det \cF^\vee$ is weakly positive. If moreover $\cF$ is a locally split $\cO_S$-submodule of $\cE$, then the line bundle $\det \cF^\vee$ is nef.
\end{theorem}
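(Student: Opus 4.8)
The plan is to reduce the statement over a higher-dimensional base $S$ to the curve case covered by Theorem~\ref{semistability of Gauss-Manin Higgs bundles}, via the standard characterization of weak positivity and nefness in terms of restriction to curves. Recall that a line bundle $\cL$ on $S$ is nef if and only if its restriction to every irreducible curve has nonnegative degree, and that weak positivity can likewise be tested (up to the usual subtleties) by pulling back along sufficiently positive complete intersection curves; so it suffices to bound $\deg(\det \cF^\vee|_C) = -\deg(\det \cF|_C)$ from below for $C \subset S$ a generic smooth complete intersection curve (and, for the nef statement, for an arbitrary curve $C$ after normalization). The point of passing to such a generic curve $C$ is that, by a Bertini-type argument, the Higgs subsheaf $(\cF,\theta) \subset (\cE,\theta)$ restricts to a Higgs subsheaf of $(\cE,\theta)|_C$, and the property of being "of geometric origin" is preserved under pullback along $C \hookrightarrow S$ (direct images commute with flat base change, direct summands pull back to direct summands, extensions to extensions).

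The key step is then the following: over the curve $C$, the restricted Higgs bundle $(\cE,\theta)|_C$ is of geometric origin, hence — unwinding the definition — built by successive extensions out of direct summands of bundles of the form $\rH^k_{Dol}(Y/C)$ for smooth proper $Y \to C$. By Theorem~\ref{semistability of Gauss-Manin Higgs bundles} each $\rH^k_{Dol}(Y/C)$ is a semistable Higgs bundle of Higgs-slope zero (its slope is zero because, e.g., the Higgs bundle and its dual both appear, or more directly because Hodge-theoretic considerations force $\deg = 0$; algebraically this is part of the Ogus–Vologodsky package). Semistability of slope zero is inherited by direct summands and is stable under extensions, so $(\cE,\theta)|_C$ is semistable of slope zero as a Higgs bundle on $C$. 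Therefore any Higgs subsheaf $(\cF,\theta)|_C$ has Higgs-slope $\le 0$, which gives $\deg(\det \cF|_C) = \deg \cF|_C \le 0$, i.e. $\deg(\det \cF^\vee|_C) \ge 0$. Running this over all generic complete intersection curves yields weak positivity of $\det \cF^\vee$; when $\cF$ is locally split we may instead restrict to an \emph{arbitrary} irreducible curve $C$ (its pullback is still a genuine subbundle, not merely a subsheaf, so the slope inequality still applies after normalizing $C$), giving nefness.

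I expect the main obstacle to be the Bertini/base-change bookkeeping rather than anything Higgs-theoretic: one must check that for a \emph{general} complete intersection curve $C \subset S$ (cut out by high multiples of an ample line bundle), (i) $C$ avoids the non-flat locus and the singular loci of the sheaves involved so that $\cF|_C \subset \cE|_C$ stays a Higgs subsheaf of the correct rank and degree, (ii) "geometric origin" genuinely descends to $C$ — the subtle point being that the witnessing smooth proper morphisms $Y \to S$ must be chosen first and then restricted, and that one handles the "extensions of these factors" clause compatibly, and (iii) the numerical test one is using for weak positivity (in the sense recalled in section~\ref{background material}) is indeed equivalent to nonnegativity of degrees on such generic curves. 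A secondary point requiring care is verifying that the relevant Gauss–Manin Higgs bundles have slope exactly zero and that this, together with semistability, is closed under the operations defining "geometric origin"; this is where one leans essentially on Ogus–Vologodsky via Theorem~\ref{semistability of Gauss-Manin Higgs bundles}.
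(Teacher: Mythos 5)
Your reduction to curves, the inheritance of semistability and degree zero under pullback, direct summands and extensions, and the slope argument giving $\deg(\cF|_C)\le 0$ are all in line with the paper: the paper proves exactly this as Theorem~\ref{geometric origin are semistable} (reducing to Theorem~\ref{semistability of Gauss-Manin Higgs bundles}) and then deduces the statement from Proposition~\ref{semipositivityI for semistable}. Your treatment of the nef case (locally split $\cF$, pull back to an arbitrary smooth proper curve mapping to $S$, use that the pullback is still a Higgs subbundle) is essentially the paper's argument.

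The gap is in the weak positivity half, at precisely the point you label (iii). For a line bundle, weak positivity in Viehweg's sense amounts to pseudo-effectivity, and pseudo-effectivity is \emph{not} detected by nonnegativity of degrees on generic complete intersection curves inside $S$ itself once $\dim S\ge 3$: the known numerical characterization (Boucksom--Demailly--P\u{a}un--Peternell) requires testing against all movable classes, i.e.\ pushforwards of complete intersections from arbitrary birational modifications $\mu:\tilde S\arrow S$, and its proof is analytic --- which defeats the purpose of this paper. Worse, even granting that duality, your slope inequality does not survive the passage to a modification: $\mu^*\cF$ acquires torsion supported on the exceptional divisor, so for a complete intersection curve $\tilde C\subset\tilde S$ one only gets $c_1(\cF)\cdot\mu_*\tilde C=\deg\bigl((\mu^*\cF/\mathrm{tors})|_{\tilde C}\bigr)+E\cdot\tilde C\le E\cdot\tilde C$ with $E$ effective, which need not be $\le 0$. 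So the numerical route does not close.

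The paper avoids any numerical criterion. In the non-locally-split case it chooses a birational morphism $\nu:S'\arrow S$ \emph{adapted to} $\cF$, so that $\nu^*\cF$ maps, generically isomorphically, to a locally split $\theta$-invariant submodule $\cG\subset\nu^*\cE$; the already-established nef case gives that $\det\cG^{\vee}$ is nef, and then the generically surjective map $\det\cG^{\vee}\arrow\det(\nu^*\cF)^{\vee}=\nu^*(\det\cF^{\vee})$ together with the functoriality statements of Lemma~\ref{Viehweg} (weak positivity passes through generically surjective maps and descends along birational morphisms) yields weak positivity of $\det\cF^{\vee}$. If you replace your step (iii) by this resolution-plus-Viehweg descent, the rest of your argument goes through.
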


Theorem \ref{semipositivityI} can be proved with analytic methods using the special curvature properties of the Hodge metric. We offer an alternative proof which is algebraic and arguably simpler, with Theorem \ref{semistability of Gauss-Manin Higgs bundles} as the main input. As a direct corollary, we obtain an algebraic proof of the following special case of a result due to Zuo \cite{Zuo_neg} and the author \cite{Bruni_Crelle, Bruni_semipositivity}.

\begin{cor}\label{semipositivityII}
Le $S$ be a smooth proper complex variety $(\cE , \theta)$ be a Higgs bundle on $S$ of geometric origin. If $\cF \subset \cE$ is a $\cO_S$-submodule of $\cE$ contained in the kernel of $\theta$, then its dual $\cF^\vee$ is weakly positive. If moreover $\cF$ is a locally split $\cO_S$-submodule of $\cE$, then $ \cF^\vee$ is nef.
\end{cor}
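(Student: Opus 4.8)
The plan is to deduce both assertions from Theorem~\ref{semipositivityI} by restricting to smooth projective \emph{curves}, on which that theorem, applied to rank-one Higgs subsheaves, simply asserts that a line subbundle killed by the Higgs field has non-positive degree. Two ingredients will be used. First, the standard positivity criteria: a vector bundle on $S$ is nef if and only if its restriction to every smooth projective curve mapping to $S$ is nef, while a torsion-free sheaf on $S$ is weakly positive if and only if its restriction to a general complete-intersection curve (for a fixed ample polarization) is nef; see Section~\ref{background material} and \cite{Viehweg_book}. Second, the stability of ``geometric origin'' under pullback: for $\gamma\colon C\arrow S$ with $C$ a smooth projective curve, if $\cE$ is a direct summand of $\rH^k_{Dol}(X/S)$, then proper smooth base change for Hodge cohomology together with the compatibility of the Gauss--Manin Higgs field with base change (via the canonical map $\gamma^\ast\Omega^1_S\arrow\Omega^1_C$) identifies the pullback $\gamma^\ast\cE$, endowed with its induced $\cO_C$-linear Higgs field $\theta_C$, with a direct summand of $\rH^k_{Dol}(X\times_S C/C)$; since pullback preserves direct summands and extensions, $(\gamma^\ast\cE,\theta_C)$ is again of geometric origin, now on $C$.

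For the nef statement, suppose $\cF\subset\cE$ is a locally split $\cO_S$-submodule with $\theta(\cF)=0$, and fix $\gamma\colon C\arrow S$ as above. Because $\cF$ is locally split, $\gamma^\ast\cF$ is a subbundle of $\gamma^\ast\cE$, and it is annihilated by $\theta_C$ (since $\theta(\cF)=0$). Hence for every line subbundle $M\subset\gamma^\ast\cF$ (that is, with locally free quotient), the pair $(M,0)$ is a rank-one Higgs subsheaf of the geometric-origin Higgs bundle $(\gamma^\ast\cE,\theta_C)$, so Theorem~\ref{semipositivityI} gives that $\det M^\vee=M^\vee$ is weakly positive, hence nef, on the curve $C$; that is, $\deg M\leq 0$. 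As the line subbundles of $\gamma^\ast\cF$ are exactly the duals of the line-bundle quotients of $(\gamma^\ast\cF)^\vee=\gamma^\ast\cF^\vee$, with opposite degrees, this shows $\gamma^\ast\cF^\vee$ is nef; letting $\gamma$ vary, $\cF^\vee$ is nef.

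For the weak-positivity statement one first replaces $\cF$ by its saturation $\overline{\cF}$ in $\cE$: this still lies in $\Ker\theta$ (which is saturated in the locally free sheaf $\cE$, its quotient $\IIm\theta$ being a subsheaf of $\cE\otimes\Omega^1_S$), and since $\overline{\cF}^\vee\hookrightarrow\cF^\vee$ is an isomorphism away from a closed subset while the reflexive symmetric powers $\hat{S}^m\overline{\cF}^\vee$ inject into $\hat{S}^m\cF^\vee$, weak positivity of $\overline{\cF}^\vee$ over a dense open set entails weak positivity of $\cF^\vee$ over a dense open set. With $\cE/\cF$ now torsion-free, a general complete-intersection curve $C\subset S$ is smooth, irreducible, and avoids the codimension $\geq 2$ loci where $\cF$ or $\cE/\cF$ fails to be locally free; hence $\cF|_C$ is a subbundle of $\cE|_C$, annihilated by the induced Higgs field, inside the geometric-origin Higgs bundle $(\cE|_C,\theta_C)$. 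The argument of the previous paragraph then shows $\cF^\vee|_C$ is nef, and by the curve criterion $\cF^\vee$ is weakly positive.

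The only genuinely non-formal step, and the main obstacle, is the stability of ``geometric origin'' under pullback: one must verify carefully that the base-change isomorphism $\gamma^\ast\rH^k_{Dol}(X/S)\cong\rH^k_{Dol}(X\times_S C/C)$ respects the Higgs (Gauss--Manin) fields, including the post-composition with $\gamma^\ast\Omega^1_S\arrow\Omega^1_C$, and that this passes to direct summands and extensions. Granting this, the remainder is the routine dictionary between nefness, weak positivity, and line subbundles on curves, together with the elementary saturation reduction.
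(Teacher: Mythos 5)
The nef half of your argument is essentially correct and is a curve-by-curve version of what the paper does: the paper exhibits the tautological sub-line-bundle $\cO_{\cF^\vee}(-1)\subset\pi^\ast\cF\subset\pi^\ast\cE$ on $\bP(\cF^\vee)$ as a Higgs subsheaf with zero Higgs field and applies Proposition \ref{semipositivityI for semistable} there, while you test the same rank-one Higgs subsheaves after restriction to each curve $\gamma\colon C\arrow S$. Since you quantify over \emph{all} such $\gamma$ (in particular over all covers $C'\arrow C\arrow S$), the full curve criterion for nefness of $\cF^\vee$ is indeed recovered; note, though, that for a \emph{fixed} $\gamma$ the non-negativity of the degrees of the quotient line bundles of $\gamma^\ast\cF^\vee$ on $C$ alone would not suffice. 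Your worry about pullback-stability of ``geometric origin'' can be bypassed entirely: first invoke Theorem \ref{geometric origin are semistable} and then work with semistability plus vanishing first Chern class, which is preserved under arbitrary pullback by the very definition adopted in Definition \ref{def semistable}; this is exactly how the paper factors the argument through Proposition \ref{semipositivityI for semistable} and Corollary \ref{semipositivityII for semistable}.

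The weak-positivity half, however, has a genuine gap: the ``curve criterion'' you invoke --- that a torsion-free sheaf is weakly positive as soon as its restriction to a general complete-intersection curve for a fixed ample polarization is nef --- is false, and it is not among the facts recalled in Section \ref{background material} or in \cite{Viehweg_book}. Weak positivity of a line bundle forces pseudo-effectivity, yet on $\bP^1\times\bP^1$ the line bundle $\cO(2,-1)$ has positive degree on a general member of $|\cO(m,m)|$ while it is not pseudo-effective. Even quantifying over all polarizations would only yield generic nefness in the sense of Miyaoka, which for sheaves of higher rank is strictly weaker than weak positivity. The paper closes this step differently: after saturating $\cF$ inside $\Ker\theta$ (your saturation reduction is fine), it takes a birational modification $\nu\colon S'\arrow S$ from a smooth projective variety on which the pullback of $\cF$ maps, generically isomorphically, to a locally split $\theta$-stable submodule $\cG\subset\nu^\ast\cE$; the nef case then applies to $\cG$ on $S'$, the generically surjective map $\cG^\vee\arrow(\nu^\ast\cF)^\vee$ gives weak positivity of $(\nu^\ast\cF)^\vee$ by Lemma \ref{Viehweg}(1), and one descends along $\nu$ by Lemma \ref{Viehweg}(3). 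Replacing your complete-intersection-curve step by this flattening argument would complete the proof.
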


As a direct application of Corollary \ref{semipositivityII}, we obtain yet another proof of the following well-known result of Griffiths:
\begin{theorem}
Let $A \arrow S$ be an abelian scheme over a smooth proper complex variety $S$. Then the vector bundle $\Omega^1_{A/S}$ is nef.
\end{theorem}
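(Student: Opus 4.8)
The plan is to deduce this from Corollary~\ref{semipositivityII}, which is precisely the tool designed for such statements. First I would identify the relevant Higgs bundle of geometric origin. For the abelian scheme $g : A \arrow S$, consider the first de Rham cohomology in the Dolbeault realization, $\cE := \rH^1_{Dol}(A/S) = g_\ast \Omega^1_{A/S} \oplus \rR^1 g_\ast \cO_A$, endowed with its Gauss-Manin Higgs field $\theta$. By the very definition recalled in the excerpt, $(\cE, \theta)$ is a Higgs bundle on $S$ of geometric origin.

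The second step is to locate a suitable locally split Higgs-invariant subsheaf. The Hodge filtration gives the $\cO_S$-submodule $\cF := g_\ast \Omega^1_{A/S} \subset \cE$, which is a subbundle (hence locally split), and one checks that $\cF$ is contained in the kernel of $\theta$: the Higgs field $\theta : \cE \arrow \cE \otimes \Omega^1_S$ is the cup product with the Kodaira-Spencer class, which on the $(1,0)$-part lands in $\rR^1 g_\ast \cO_A \otimes \Omega^1_S$, so its restriction to the top piece of the filtration is simply the natural edge map, but in the Higgs (associated graded) picture $\theta$ is the zero map on $\cF = \mathrm{gr}^1$ into $\mathrm{gr}^1 \otimes \Omega^1_S$; more directly, the $(1,0)$ piece of $\rH^1_{Dol}$ is the lowest-weight piece and $\theta$ decreases the Hodge degree, so $\theta(\cF) \subseteq \mathrm{gr}^0 \otimes \Omega^1_S$ while for it to define an endomorphism of $\cF$ one needs $\theta|_\cF = 0$ as a map $\cF \to \cF \otimes \Omega^1_S$ --- in any case $\cF \subseteq \Ker\theta$ after passing to the graded Higgs bundle, which is the geometric-origin object in question. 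Applying Corollary~\ref{semipositivityII} to this locally split $\cF \subseteq \Ker\theta$ yields that $\cF^\vee = (g_\ast \Omega^1_{A/S})^\vee$ is nef.

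The final step is to pass from $(g_\ast \Omega^1_{A/S})^\vee$ being nef to $\Omega^1_{A/S}$ being nef. Here one uses that for an abelian scheme the relative cotangent bundle is pulled back from the base: $\Omega^1_{A/S} \cong g^\ast \, e^\ast \Omega^1_{A/S}$, where $e : S \arrow A$ is the zero section, and moreover $e^\ast \Omega^1_{A/S} \cong (g_\ast \Omega^1_{A/S})^\vee$ canonically (this is the standard identification of the Lie algebra of the dual picture, coming from $g_\ast \Omega^1_{A/S} \cong e^\ast \Omega^1_{A/S}$ being the cotangent space at the identity, dualized appropriately --- one should state this carefully, but it is classical). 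Since nefness is preserved under pullback along any morphism, $\Omega^1_{A/S} = g^\ast (g_\ast \Omega^1_{A/S})^\vee$ is nef.

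I expect the only genuinely delicate point to be the bookkeeping in the second step: being precise about whether one works with the Hodge-filtered de Rham bundle or its associated graded Higgs bundle, and verifying cleanly that the relevant piece is Higgs-invariant (i.e.\ killed by $\theta$), so that Corollary~\ref{semipositivityII} applies verbatim. The reduction in the third step is standard for abelian schemes and should be dispatched with a reference.
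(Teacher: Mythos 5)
Your overall strategy (reduce to Corollary~\ref{semipositivityII} applied to $\rH^1_{Dol}(A/S)$, then use that $\Omega^1_{A/S}$ is pulled back from the base) is the intended one, but there is a genuine error at the crucial step. With the paper's conventions the Higgs field \emph{lowers} the Hodge degree: on $\cE=\rH^1_{Dol}(A/S)=g_\ast\Omega^1_{A/S}\oplus \mathrm{R}^1g_\ast\cO_A$ it sends $g_\ast\Omega^1_{A/S}$ into $\mathrm{R}^1g_\ast\cO_A\otimes\Omega^1_S$, and this component is exactly cup product with the Kodaira--Spencer class, which is nonzero whenever the family is non-isotrivial. So your $\cF=g_\ast\Omega^1_{A/S}$ is \emph{not} contained in $\Ker\theta$, and Corollary~\ref{semipositivityII} does not apply to it; your own computation ($\theta(\cF)\subseteq \mathrm{gr}^0\otimes\Omega^1_S$) shows this, and the subsequent sentence conflates ``$\theta$ does not preserve $\cF$'' with ``$\theta$ kills $\cF$''. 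The graded piece that \emph{is} killed by $\theta$ is the bottom one, $\mathrm{R}^1g_\ast\cO_A$, which is a locally split direct summand; the corollary gives that $(\mathrm{R}^1g_\ast\cO_A)^\vee$ is nef.

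There is a second, compensating error in your last step: for the zero section $e$ one has $e^\ast\Omega^1_{A/S}\cong g_\ast\Omega^1_{A/S}$ (the Hodge bundle itself), not its dual; the dual $(g_\ast\Omega^1_{A/S})^\vee$ is $\Lie(A/S)=e^\ast T_{A/S}$. So what you actually need to prove nef is $g_\ast\Omega^1_{A/S}$, not $(g_\ast\Omega^1_{A/S})^\vee$. The correct assembly is: $(\mathrm{R}^1g_\ast\cO_A)^\vee\cong \Lie(A^\vee/S)^\vee=\omega_{A^\vee/S}$ is nef, which is the theorem for the dual abelian scheme $A^\vee\arrow S$ after pulling back by $(g^\vee)^\ast$; applying the same argument to $A^\vee\arrow S$ and using biduality $A^{\vee\vee}\cong A$ (or, alternatively, identifying $\Lie(A^\vee/S)\cong\Lie(A/S)$ via an isogeny coming from a polarization, which is an isomorphism on Lie algebras in characteristic zero) yields that $g_\ast\Omega^1_{A/S}$ is nef, and then $\Omega^1_{A/S}\cong g^\ast g_\ast\Omega^1_{A/S}$ is nef. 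With these two corrections your proof matches the application the paper has in mind.
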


\begin{rem}
Note that thanks to Lefschetz principle all the preceding results are true over any field of characteristic zero (in fact all the proofs in this paper work in this generality). However, they are no longer true in characteristic $p > 2$, since Moret-Bailly has constructed non-isotrivial families of abelian surfaces $f: X \arrow \mathbb{P}^1$ such that $f_* \omega_{X/{\mathbb{P}^1}} = \mathcal{O}_{\bP^1}(p) \bigoplus \mathcal{O}_{\bP^1}(-1) $, cf. \cite{Moret-Bailly81}.
\end{rem}


\section{Positivity of torsion-free sheaves}\label{background material}
We recall for the reader convenience the definitions and properties of the different notions of positivity for torsion-free sheaves on smooth proper varieties that are used in this paper.\\

We start with the notions of weak positivity and bigness introduced by Viehweg, cf. \cite{Viehweg83} and \cite[ p.59-67]{Viehweg_book}. Recall that a coherent sheaf $\cF$ on a scheme $X$ is globally generated at a point $x \in X$ if the natural map $\HH^0(X, \cF) \otimes_{\bC} \cO_X \arrow \cF$ is surjective at $x$.

\begin{defn}[Viehweg]
Let $X$ be a smooth projective variety and $\cF$ a torsion-free sheaf on $X$. Let $i : V  \hookrightarrow X$ denotes the inclusion of the biggest open subset on which $\cF$ is locally free. 
\begin{enumerate}
\item We say that $\cF$ is weakly positive over the dense open subset $U \subset V$ if for every ample invertible sheaf $\cH$ on $X$ and every positive integer $\alpha  > 0$ there exists an integer $\beta > 0$ such that $\widehat{S}^{\alpha \cdot \beta} \cF \otimes_{ \cO_X} {\cH}^{\beta}$ is globally generated over $U$.
\item We say that $\cF$ is Viehweg-big over the dense open subset $U \subset V$ if for any line bundle $\cH$ there exists $\gamma > 0$ such that $\widehat{S}^{\gamma} \cF \otimes \cH^{-1}$ is weakly positive over $U$.
\end{enumerate}
(Here the notation $\widehat{S}^{k} \cF $ stands for the reflexive hull of the sheaf $S^{k} \cF $, i.e. $\widehat{S}^{k} \cF = i_* ( S^{k} i^*\cF)$.) We say that $\cF$ is weakly positive (resp. Viehweg-big) if there exists a dense open subset $U \subset V$ such that $\cF$ is weakly positive (resp. Viehweg-big) over $U$. 
\end{defn}

\begin{rem}
If $X$ is a smooth projective variety and $\cF$ is a locally free $\cO_X$-module, then $\cF$ is nef if and only if it is weakly positive over $X$, and $\cF$ is ample if and only if it is Viehweg-big over $X$.
\end{rem}

\begin{lem}[Viehweg]\label{Viehweg} Let $\cF$ and $\cG$ be torsion-free sheaves on a smooth projective variety $X$.

\begin{enumerate}

\item If $\cF \arrow \cG$ is a morphism, surjective over $U$, and if $\cF$ is weakly positive (resp. Viehweg-big) over $U$, then $\cG$ is weakly positive (resp. Viehweg-big) over $U$.

\item Let $f : Y \arrow X$ be a morphism between two smooth projective varieties. If $\cF$ is weakly positive over $U \subset X$ and $f^{-1}(U)$ is dense in $Y$, then $f^* \cF /(f^* \cF)_{tors}$ is weakly positive over $f^{-1}(U)$.

\item If $f:X^\prime \arrow X $ is a  birational  morphism between two smooth projective varieties which is an isomorphism over $U \subset X$, then  $\cF$ is weakly positive (resp. Viehweg-big) over $U$ if and only if $f^* \cF /(f^* \cF)_{tors}$ is weakly positive (resp. Viehweg-big) over $f^{-1}(U)$.

\item If $\cF$ is weakly positive and $\cH$ is a Viehweg-big line bundle, then $\cF \otimes \cH$ is Viehweg-big.

\end{enumerate}
\end{lem}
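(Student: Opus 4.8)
The plan is to derive all four assertions from the definitions of weak positivity and Viehweg-bigness, using only two elementary facts about global generation; this is how Viehweg proceeds in \cite{Viehweg83} and \cite[\S2]{Viehweg_book}, to which I would refer for the details I omit. The two facts I would establish first are: (a) if a coherent sheaf $\cM$ on a smooth projective variety is globally generated over an open set $W$, then so is $\widehat S^n\cM$ for every $n\geq 1$ — the surjection $\HH^0(\cM)\otimes\cO\twoheadrightarrow\cM$ over $W$ induces a surjection $S^n\HH^0(\cM)\otimes\cO\twoheadrightarrow S^n\cM$ over $W$ factoring through $\HH^0(S^n\cM)\otimes\cO$, and one passes to reflexive hulls over the locally free locus; and (b) the tensor product of a sheaf globally generated over $W$ with one globally generated everywhere is globally generated over $W$, via the multiplication $\HH^0\otimes\HH^0\to\HH^0$. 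From (a) and (b), by a routine bookkeeping argument matching up exponents, one gets that $\widehat S^n\cF$ is weakly positive whenever $\cF$ is, that $\cF\otimes\cL$ is weakly positive whenever $\cF$ is weakly positive and $\cL$ is a weakly positive invertible sheaf, and that in the definition of weak positivity over $U$ a single ample bundle suffices (given two ample bundles $\cH_1,\cH_2$ and a multiplier, one absorbs a large power of the globally generated $\cH_1^{-1}\otimes\cH_2^{c}$, $c\gg 0$). I would take these preliminaries for granted.

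For (1): a morphism $\cF\to\cG$ surjective over $U$ induces a surjection $\widehat S^m\cF\to\widehat S^m\cG$ over a dense open of $U$, so tensoring with $\cH^\beta$ and using that global generation over $U$ passes to quotients transfers weak positivity from $\cF$ to $\cG$; for Viehweg-bigness one picks $\gamma$ with $\widehat S^\gamma\cF\otimes\cH^{-1}$ weakly positive over $U$, notes that $\widehat S^\gamma\cF\otimes\cH^{-1}\twoheadrightarrow\widehat S^\gamma\cG\otimes\cH^{-1}$ over $U$, and applies the weakly positive case. For (2): global generation pulls back (if $\HH^0(X,\cM)\otimes\cO_X\twoheadrightarrow\cM$ over $W$ with $f^{-1}(W)$ dense, then $\HH^0(Y,f^*\cM)\otimes\cO_Y\twoheadrightarrow f^*\cM$ over $f^{-1}(W)$); then, given an ample $\cH_Y$ on $Y$, I would fix an ample $\cH_X$ on $X$ and $\alpha>0$, choose $c\geq 1$ with $\cH_Y^{c}\otimes f^*\cH_X^{-1}$ globally generated, use weak positivity of $\cF$ with multiplier $\alpha c$ to get $\widehat S^{\alpha c\beta'}\cF\otimes\cH_X^{\beta'}$ globally generated over $U$, pull this back to $Y$ — noting that over $f^{-1}(U)$, where $f^*\cF$ is already locally free, $f^*(\widehat S^{\alpha c\beta'}\cF)$ agrees with $\widehat S^{\alpha c\beta'}(f^*\cF/(f^*\cF)_{tors})$ — and tensor with the globally generated $(\cH_Y^{c}\otimes f^*\cH_X^{-1})^{\beta'}$ to conclude with $\beta=c\beta'$; the Viehweg-big case follows in the same way. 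For (4): given a line bundle $\cL$, Viehweg-bigness of $\cH$ gives $m$ with $\cH^{m}\otimes\cL^{-1}$ weakly positive, and then $\widehat S^{m}(\cF\otimes\cH)\otimes\cL^{-1}=(\widehat S^{m}\cF)\otimes(\cH^{m}\otimes\cL^{-1})$ is the tensor product of a weakly positive sheaf with a weakly positive line bundle, hence weakly positive by the preliminaries, so $\cF\otimes\cH$ is Viehweg-big.

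For (3) the implication ``$\Rightarrow$'' is the case $Y=X'$ of (2). For the converse I would first reduce to $\cF$ reflexive, which does not change $\widehat S^n\cF$ and hence not weak positivity over $U$; then, writing $\sigma=f$ and using that $X$ is normal so $\sigma_*\cO_{X'}=\cO_X$ and that push-forward is exact over $U$, one gets that $\sigma_*\cM$ is globally generated over $U$ for every sheaf $\cM$ on $X'$ globally generated over $\sigma^{-1}(U)$. The goal is then, for an ample $\cH$ on $X$, to produce global generation of $\widehat S^{\alpha\beta}(\sigma^*\cF/\mathrm{tors})\otimes\sigma^*\cH^{\beta}$ over $\sigma^{-1}(U)$ out of weak positivity of $\sigma^*\cF/\mathrm{tors}$, and then push forward, identifying $\sigma_*\big(\widehat S^{\alpha\beta}(\sigma^*\cF/\mathrm{tors})\otimes\sigma^*\cH^{\beta}\big)$ with $\widehat S^{\alpha\beta}\cF\otimes\cH^{\beta}$ over $U$. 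The hard part is exactly this last step: $\sigma^*\cH$ is only nef and big on $X'$, not ample, so it cannot be fed directly into the definition of weak positivity, and one must instead exploit its bigness (writing a suitable multiple as an ample bundle plus an effective, essentially $\sigma$-exceptional divisor and controlling everything over $\sigma^{-1}(U)$); this is the one genuinely delicate point, and here I would simply invoke the argument of \cite[\S2]{Viehweg_book}. The Viehweg-big statement in (3) then follows formally from the weakly positive one.
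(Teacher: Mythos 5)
The paper gives no proof of this lemma at all---it is stated as a known result of Viehweg with a pointer to \cite{Viehweg83} and \cite[p.~59--67]{Viehweg_book}---so there is nothing internal to compare against; your sketch is a faithful reproduction of the standard arguments from exactly those references, and the individual steps (global generation passing to quotients, pullbacks, and symmetric powers; the absorption trick for changing ample bundles; the identification $\widehat{S}^{m}(\cF\otimes\cH)\otimes\cL^{-1}=\widehat{S}^{m}\cF\otimes\cH^{m}\otimes\cL^{-1}$ for a line bundle $\cH$) are all correct. You rightly flag the only genuinely delicate point, the push-forward direction of (3) where $f^{*}\cH$ is merely nef and big, and your deferral of that step to \cite[\S 2]{Viehweg_book} is no weaker than what the paper itself does.
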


If $\cF$ is a torsion-free sheaf on a smooth proper (but non-necessarily projective) variety $X$, we say that $\cF$ is weakly positive (resp. Viehweg-big) if for any birational morphism $f:X^\prime \arrow X $ from a smooth projective variety, the torsion-free sheaf $f^* \cF /(f^* \cF)_{tors}$ is weakly positive (resp. Viehweg-big). This definition extends the definition for torsion-free sheaves on projective varieties thanks to the preceding lemma. \\

We introduce now a weaker notion of bigness. If $\cE$ is a vector bundle on a smooth proper variety $X$, we let $\pi : \bP(\cE) := \Proj_{\cO_X}(\Sym \, \cE) \arrow X$ be the projective bundle of one-dimensional quotients of $\cE$ and $\cO_{\cE}(1) $ be the tautological line bundle which fits in an exact sequence $ \pi^\ast \cE \arrow  \cO_{\cE}(1) \arrow 0$.

\begin{defn}[cf. {\cite[Definition 2.6]{Bruni_AENS}} and {\cite[Lemma 2.5]{Bruni_AENS}}]\label{Hartshorne-big}
A vector bundle $\cE$ on a smooth proper variety $X$ is called big (in the sense of Hartshorne) if it satisfies one of the following equivalent conditions:
\begin{enumerate}
\item The line bundle $ \cO_{\cE}(1)$ is Viehweg-big.
\item For some (resp. any) Viehweg-big line bundle $\cH$, there exists an injective map $ 0 \arrow \cH \arrow S^k \cE$ for some $k >0$. 
\item For some (resp. any) Viehweg-big torsion-free sheaf $\cF$, there exists a non-zero map $\cF \arrow S^k \cE$ for some $k >0$. 
\end{enumerate}
\end{defn}
A Viehweg-big vector bundle is big in the sense of Hartshorne, but the converse is not true (consider for example the rank $2$ vector bundle $ \cO(1) \oplus  \cO(-1)$ on $\bP^1$). Note however that the two notions coincide for line bundles.

\section{Families of varieties with trivial canonical bundle}\label{families of Calabi-Yau varieties}

According to Griffiths, the following observation is due to Andreotti.

\begin{lem}\label{Infinitesimal Torelli}
Let $X$ be a smooth proper complex variety of dimension $d$. If the canonical bundle $\omega_X$ of $X$ is trivial, then the canonical $\bC$-linear map 
\[ \HH^1(X , T_X) \otimes_{\bC} \HH^0(X, \omega_X) \arrow \HH^1(X, \Omega_X^{d-1}) \]
is an isomorphism.
\end{lem}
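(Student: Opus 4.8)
The plan is to realize the map of the statement as the composite of a cup product with a canonical isomorphism of sheaves, and then to exploit the triviality of $\omega_X$.

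First I would recall the canonical isomorphism of $\cO_X$-modules $\Lambda^{d-1}\cE \cong \cE^\vee \otimes \Lambda^d\cE$ valid for any locally free sheaf $\cE$ of rank $d$. Applied to $\cE = \Omega_X^1$ it gives a canonical isomorphism
\[ c : T_X \otimes_{\cO_X} \omega_X \xrightarrow{\ \sim\ } \Omega_X^{d-1}, \qquad v \otimes \alpha \longmapsto \iota_v\alpha \]
(interior product). By construction, the canonical $\bC$-linear map in the statement is the cup product
\[ \HH^1(X, T_X) \otimes_{\bC} \HH^0(X, \omega_X) \longrightarrow \HH^1(X, T_X \otimes_{\cO_X} \omega_X) \]
followed by the isomorphism $\HH^1(X, c)$. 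Hence it suffices to prove that the cup-product map itself is an isomorphism.

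For this I would use the hypothesis that $\omega_X$ is trivial. Choosing a nowhere-vanishing global section $\omega$ provides an isomorphism $\cO_X \xrightarrow{\ \sim\ } \omega_X$ sending $1$ to $\omega$, hence an isomorphism $T_X \xrightarrow{\ \sim\ } T_X \otimes_{\cO_X}\omega_X$; it also shows that $\HH^0(X, \omega_X)$ is the line $\bC\cdot\omega$ (here one uses $\HH^0(X, \cO_X) = \bC$, i.e. that $X$ is connected, as it is in all the applications in this paper). Through these two identifications the cup-product map becomes the tautological isomorphism $\HH^1(X, T_X)\otimes_{\bC}\bC\omega \xrightarrow{\ \sim\ } \HH^1(X, T_X)$, $\eta\otimes\omega\mapsto\eta$. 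Unwinding, the map of the statement is precisely the isomorphism $\HH^1(X, T_X)\xrightarrow{\ \sim\ }\HH^1(X, \Omega_X^{d-1})$ obtained by applying $\HH^1(X,-)$ to the sheaf isomorphism $v\mapsto\iota_v\omega$.

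I do not expect a genuine obstacle: the argument is an unwinding of definitions, the two points meriting a line of proof being that $c$ is an isomorphism of sheaves — which is pointwise linear algebra — and that cup product is compatible with the two trivializations, which is formal. If one does not wish to assume $X$ connected, the argument applies verbatim on each connected component.
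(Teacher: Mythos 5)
Your proof is correct, and it is exactly the standard argument (the paper itself gives no proof, simply attributing the observation to Andreotti via Griffiths): contraction with a trivializing section of $\omega_X$ identifies $T_X$ with $\Omega_X^{d-1}$, and $\HH^0(X,\omega_X)=\bC$ by properness and connectedness, so the cup-product map reduces to $\HH^1$ of that sheaf isomorphism. The two points you single out — that $c$ is an isomorphism and that cup product is compatible with the trivializations — are indeed the only things to check, and both hold as you say.
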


In particular, any smooth proper variety with a trivial canonical bundle satisfies the infinitesimal Torelli property.\\

The following result relates the positivity of the Hodge line bundle of a family of projective Calabi-Yau varieties to the injectivity of the Kodaira-Spencer map. 

\begin{prop}\label{bigness of the Hodge bundle implies KS injective}
Let $S$ be a smooth proper connected complex variety. Let $f: X \arrow S$ be a smooth projective morphism with connected fibres. Assume that the relative canonical bundle $\omega_{X/S}$ is relatively trivial. If the Hodge line bundle $f_\ast(\omega_{X/S})$ is big, then the Kodaira-Spencer map $T_S \arrow R^1 T_{X/S}$
is an injective morphism of $\cO_S$-modules.
\end{prop}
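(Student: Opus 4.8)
The plan is to translate the statement into an injectivity property of the Gauss--Manin Higgs field on $\rH^{d}_{Dol}(X/S)$, and then to deduce it from the bigness of the Hodge line bundle via Theorem~\ref{semipositivityI}.

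Set $\cL:=f_\ast(\omega_{X/S})$. As $\omega_{X/S}$ is relatively trivial and the fibres of $f$ are connected, $\cL$ is a line bundle and $f^\ast\cL\xrightarrow{\ \sim\ }\omega_{X/S}$; hence $\Omega^{d-1}_{X/S}\cong T_{X/S}\otimes f^\ast\cL$ and, by the projection formula, $R^{1}f_\ast\Omega^{d-1}_{X/S}\cong R^{1}f_\ast T_{X/S}\otimes\cL$. Since $f$ is smooth and projective the Hodge numbers of its fibres are locally constant, so every $R^{q}f_\ast\Omega^{p}_{X/S}$ is locally free and its formation commutes with base change; consequently Lemma~\ref{Infinitesimal Torelli} globalises: the relative cup-product morphism
\[
\mu\colon\ \cL\otimes R^{1}f_\ast T_{X/S}\ \arrow\ R^{1}f_\ast\Omega^{d-1}_{X/S}
\]
is fibrewise the isomorphism of Lemma~\ref{Infinitesimal Torelli}, hence is an isomorphism. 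On $\rH^{d}_{Dol}(X/S)=\bigoplus_{p+q=d}R^{q}f_\ast\Omega^{p}_{X/S}$ the Gauss--Manin Higgs field $\theta$ restricts on the top piece $R^{0}f_\ast\Omega^{d}_{X/S}=\cL$ to cup-product with the Kodaira--Spencer class followed by the contraction $\Omega^{d}_{X/S}\otimes T_{X/S}\to\Omega^{d-1}_{X/S}$; unwinding this, the adjoint of $\theta|_{\cL}$ is the composite
\[
\cL\otimes T_{S}\ \xrightarrow{\ \mathrm{id}_{\cL}\otimes\kappa\ }\ \cL\otimes R^{1}f_\ast T_{X/S}\ \xrightarrow{\ \mu\ }\ R^{1}f_\ast\Omega^{d-1}_{X/S}.
\]
Because $\mu$ is an isomorphism and $\cL$ is invertible, this composite is injective as a morphism of $\cO_{S}$-modules if and only if $\kappa$ is; so it suffices to prove $\ker\kappa=0$.

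Assume for a contradiction that $\cN:=\ker\kappa\neq 0$; it is automatically saturated in $T_{S}$, since $R^{1}f_\ast T_{X/S}$ is torsion-free. If $\cN=T_{S}$ (i.e.\ $\kappa=0$) the conclusion is immediate: then $\cL=R^{0}f_\ast\Omega^{d}_{X/S}$ is a Higgs subsheaf of the geometric Higgs bundle $\rH^{d}_{Dol}(X/S)$ on which $\theta$ vanishes, so Theorem~\ref{semipositivityI} makes $\cL^{\vee}$ weakly positive; but a weakly positive line bundle has pseudo-effective class, which together with $\cL$ big forces $[\cL]$ to be numerically trivial, contradicting bigness (one may assume $\dim S\geq 1$). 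In the remaining case $0<\rk\cN<\dim S$, let $\cF\subseteq\rH^{d}_{Dol}(X/S)$ be the Higgs subsheaf generated by $\cL$. Using $\theta\wedge\theta=0$ together with $\iota_{v}(\theta|_{\cL})=0$ for local sections $v$ of $\cN$, one checks by descending induction on the Hodge degree that $\iota_{v}\theta$ annihilates all of $\cF$, so $\theta(\cF)\subseteq\cF\otimes\cN^{\perp}$ where $\cN^{\perp}\subsetneq\Omega^{1}_{S}$ is the annihilator of $\cN$; moreover $\cF$ is a homogeneous subsheaf and, under the isomorphisms $R^{k}f_\ast\Omega^{d-k}_{X/S}\cong R^{k}f_\ast(\wedge^{k}T_{X/S})\otimes\cL$, it takes the shape $\cF\cong\cL\otimes\cW$ for a subsheaf $\cW$ of $\bigoplus_{k}R^{k}f_\ast(\wedge^{k}T_{X/S})$ with $\cW\cap R^{1}f_\ast T_{X/S}\cong\IIm(\kappa)$, whence $\rk\cW\geq 1+\rk\IIm(\kappa)\geq 2$.

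By Theorem~\ref{semipositivityI} applied to $\cF\subseteq\rH^{d}_{Dol}(X/S)$, the line bundle $\det\cF^{\vee}=\cL^{-\rk\cW}\otimes\det\cW^{\vee}$ is weakly positive; equivalently $\rk(\cW)\cdot[\cL]+[\det\cW]$ is anti-pseudo-effective. The step I expect to be the main obstacle is to turn this into a contradiction with the bigness of $\cL$: since $\cL$ is big and $\rk\cW\geq 2$, the class $\rk(\cW)\cdot[\cL]$ lies in the interior of the pseudo-effective cone, so one must bound the positivity of $\det\cW$ from above, and this does not follow formally, as $\cW$ is assembled from quotients of symmetric powers of $\IIm(\kappa)=T_{S}/\cN$. (This is in essence the inequality bounding the Iitaka dimension of the Hodge line bundle by the variation of $f$, whose Higgs-theoretic proof is an iterated application of the semipositivity statements of Theorems~\ref{semistability of Gauss-Manin Higgs bundles}--\ref{semipositivityI}.) Concretely, I would either exploit the remaining structure of $\cW$ --- its Higgs field is again $\cN^{\perp}$-valued, and comparing $\det\cF^{\vee}$ with the weak positivity of $\det\cE^{\vee}$ for further Higgs subsheaves $\cE$ of $\rH^{d}_{Dol}(X/S)$ and of its tensor powers --- or restrict the whole configuration to a general complete-intersection curve $C\subseteq S$, over which $\rH^{d}_{Dol}(X_{C}/C)$ is semistable of degree $0$ by Theorem~\ref{semistability of Gauss-Manin Higgs bundles}, so that $\deg(\cF|_{C})\leq 0$ yields an explicit numerical constraint to be played against $\deg(\cL|_{C})>0$.
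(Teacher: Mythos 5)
Your reduction to the injectivity of $\kappa$ and your treatment of the degenerate case $\kappa=0$ are fine, but the argument has a genuine gap exactly where you flag it: the case $0<\rk(\ker\kappa)<\dim S$ is never closed. From Theorem~\ref{semipositivityI} applied to the Higgs subsheaf $\cF\cong\cL\otimes\cW$ generated by $\cL$ you only obtain that $(\cL^{\vee})^{\otimes\rk\cW}\otimes\det\cW^{\vee}$ is weakly positive, and to contradict the bigness of $\cL$ you would need an a priori upper bound on the positivity of $\det\cW$. No such bound is available at this stage of the paper: the analogous estimates (in the proof of Theorem~\ref{Arakelov inequality}) rest on the weak positivity of $\Omega^1_S$, which is itself deduced from the injectivity of the Kodaira--Spencer map via Proposition~\ref{Weak-positivity of the cotangent bundle}, so the routes you sketch are circular. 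Restricting to a general complete-intersection curve $C$ does not help either: Theorem~\ref{semistability of Gauss-Manin Higgs bundles} and $\deg(\cF|_C)\le 0$ give the same inequality $\rk(\cW)\cdot\deg(\cL|_C)\le -\deg(\det\cW|_C)$ with no control on the right-hand side. So the proposal does not prove the proposition.

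The paper's proof avoids Higgs bundles entirely. Choosing a line bundle on $X$ relatively ample over $S$ gives a classifying morphism $S\arrow\cM_P$ to the moduli stack of polarized pairs up to numerical equivalence; a positive power of the Hodge line bundle is pulled back from the coarse moduli space $M_P$, so its bigness forces $S\arrow M_P$ to be generically finite. Since the Kodaira--Spencer map of the universal family over $\cM_P$ is an injective morphism of $\cO_{\cM_P}$-modules, the Kodaira--Spencer map of $f$ is generically injective, hence injective everywhere because $T_S$ is locally free and therefore torsion-free. If you want to keep your set-up, the only missing step is precisely this generic injectivity of $\kappa$, and the moduli-theoretic argument is the way the paper supplies it; nothing in your first two paragraphs is wasted, since the identification of $\theta|_{\cL}$ with $\mu\circ(\mathrm{id}_{\cL}\otimes\kappa)$ is exactly how the paper later combines Lemma~\ref{Infinitesimal Torelli} with this proposition.
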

Using analytic techniques, it is in fact possible  to prove that for any family of smooth proper complex varieties with trivial canonical bundle, the bigness of the Hodge line bundle is equivalent to the injectivity of the Kodaira-Spencer map. However, we do not know an algebraic proof of this fact.

\begin{proof}
Let $\cL$ be a line bundle on $X$ which is relatively ample with respect to $f$. Since the Hilbert polynomial $P$ of $(X_s, \cL_{|X_s})$ is independent of the point $s \in S$, we get a morphism $S \arrow \cM_P$ with values in the corresponding moduli stack of polarized pairs modulo numerical equivalence. It is a separated Deligne-Mumford stack of finite type over $\bC$ \cite{Viehweg_book}. We denote by $M_P$ the associated coarse moduli space. The Hodge line bundle $f_\ast(\omega_{X/S})$ is the pull-back of a line bundle on $\cM_P$, hence some positive power of the Hodge line bundle is the pullback of a line bundle on $M_P$. Since the Hodge line bundle on $S$ is big by assumption, the morphism $S \arrow M_P$ is generically finite. But the Kodaira-Spencer map of the universal family on $\cM_P$ is an injective morphism of $\cO_{\cM_P}$-modules. It follows that the Kodaira-Spencer map $T_S \arrow R^1 T_{X/S}$ is a generically injective morphism of $\cO_S$-modules, and even injective everywhere since $T_S$ is locally-free hence torsion-free.

\end{proof}
\section{Semi-positivity from Higgs bundles}

\begin{defn}\label{def semistable}
A Higgs bundle $(\cE, \theta)$ on a smooth proper curve is said semistable if it has no Higgs subbundle $(\cF, \theta)$ with $\deg(\cF) > \deg(\cE)$. More generally, a Higgs bundle $(\cE, \theta)$ on a smooth proper variety $S$ is said semistable if for every smooth proper curve $C$ equipped with a morphism $f : C \arrow S$ the Higgs bundle $f^\ast (\cE,\theta)$ is semistable.
\end{defn}
Note that by the very definition, the pull-back of a semistable Higgs bundle is semistable (in particular, this definition of semistability is stronger than the usual definition of semistability with respect to an ample divisor class). The following result is the main input of this note (see below for the proof).

\begin{theorem}\label{geometric origin are semistable}
Any Higgs bundle of geometric origin on a smooth proper complex variety is semistable and its first rational Chern class is zero. 
\end{theorem}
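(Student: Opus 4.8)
The plan is to reduce to the curve case, where Theorem \ref{semistability of Gauss-Manin Higgs bundles} applies, and then bootstrap from semistability to the vanishing of the rational Chern class. Recall that a Higgs bundle of geometric origin is, by definition, built from the $\rH^k_{Dol}(X/S)$ by taking direct summands in $\HIG(S)$ and then taking extensions. So I would first check that the class of semistable Higgs bundles with vanishing rational Chern classes is closed under these operations. Pull-back along a morphism $f : C \arrow S$ from a smooth proper curve is exact and compatible with direct sums, tensor products, duals, and extensions, and it commutes with base change for the cohomology sheaves $\rH^k_{Dol}$; so it suffices to establish semistability and Chern-class vanishing for pull-backs to smooth proper curves, and in fact (after normalizing and passing to a smooth projective model, using that semistability is insensitive to finite base change on a curve) for $S$ itself a smooth projective complex curve.

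On a smooth projective curve, semistability of $\rH^k_{Dol}(X/S)$ is exactly Theorem \ref{semistability of Gauss-Manin Higgs bundles}. To see that the first Chern class vanishes, note that Poincaré--Verdier duality (or Serre duality along the fibres) gives an isomorphism of Higgs bundles $\rH^k_{Dol}(X/S) \cong \rH^{2d-k}_{Dol}(X/S)^\vee \otimes \OO_S$ up to a twist by a line bundle coming from $\omega_{X/S}$; more robustly, the Hodge symmetry $\mathrm{R}^q f_\ast \Omega^p_{X/S} \cong (\mathrm{R}^{d-q} f_\ast \Omega^{d-p}_{X/S})^\vee$ (with $d$ the relative dimension) pairs up the Hodge pieces so that $\deg \rH^k_{Dol}(X/S) = 0$. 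Alternatively, once we know $\rH^k_{Dol}(X/S)$ is semistable, so is its dual, and $\rH^k_{Dol}(X/S)^\vee \cong \rH^{2d-k}_{Dol}(X/S)$ as Higgs bundles by relative duality; a semistable Higgs bundle whose dual is also (isomorphic to) a semistable Higgs bundle of the same rank forces the slope to be $0$, hence $c_1 = 0$ in $\mathrm{H}^2(S, \bQ)$. Either way the degree vanishes.

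It remains to propagate both properties through direct summands and extensions. For direct summands: if $(\cE,\theta) = (\cE_1,\theta_1) \oplus (\cE_2,\theta_2)$ in $\HIG(S)$ with $(\cE,\theta)$ semistable of degree $0$, then each $(\cE_i, \theta_i)$ is semistable (a destabilizing Higgs subsheaf of a summand destabilizes the whole), and $\deg \cE_1 + \deg \cE_2 = 0$ together with semistability (which gives $\deg \cE_i \le 0$, applied also to $\cE_i^\vee \cong$ the complementary summand's dual inside $\cE^\vee$, giving $\deg \cE_i \ge 0$) forces $\deg \cE_i = 0$, so $c_1(\cE_i)_\bQ = 0$ after testing against all curves. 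For extensions $0 \arrow (\cE',\theta') \arrow (\cE,\theta) \arrow (\cE'',\theta'') \arrow 0$ with the outer terms semistable of degree $0$: the additivity $\deg \cE = \deg \cE' + \deg \cE'' = 0$ is immediate, and semistability of $(\cE,\theta)$ follows since any Higgs subsheaf $(\cF,\theta)$ has $\deg \cF \le \deg(\cF \cap \cE') + \deg(\text{image in }\cE'') \le 0$ by semistability of the two pieces. Testing on every smooth proper curve $C \to S$ then gives $c_1 = 0$ rationally on $S$.

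The main obstacle I anticipate is purely bookkeeping: making precise that ``geometric origin'' is generated under the stated operations in a way compatible with pull-back to curves and with relative duality, and checking that the degree-$0$ and semistability conditions really are stable under extensions inside the abelian category $\HIG(S)$ (the subsheaf/quotient slope inequalities above need the Higgs-subsheaf notion, not merely the underlying $\cO_S$-module, but this is exactly how Definition \ref{def semistable} is set up). No curvature or transcendental input is needed beyond Theorem \ref{semistability of Gauss-Manin Higgs bundles}.
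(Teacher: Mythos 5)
Your overall architecture is the same as the paper's: reduce to a smooth projective curve by pulling back (geometric origin is preserved by base change, and $c_1=0$ in $\rH^2(S,\bQ)$ can be tested on curves), check that semistability-plus-degree-zero is stable under the extensions and direct summands allowed in the definition of ``geometric origin'', and then invoke Theorem \ref{semistability of Gauss-Manin Higgs bundles} for the semistability of $\rH^k_{Dol}(X/S)$. Those reduction steps are fine and match the paper.

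The gap is in your proof that $\deg \rH^k_{Dol}(X/S)=0$ on a curve. Relative Serre duality gives $\mathrm{R}^q f_\ast \Omega^p_{X/S} \cong (\mathrm{R}^{d-q} f_\ast \Omega^{d-p}_{X/S})^\vee$, which pairs the summand of bidegree $(p,q)$ in $\rH^k_{Dol}$ with the summand of bidegree $(d-p,d-q)$ in $\rH^{2d-k}_{Dol}$. This only yields $\deg \rH^k_{Dol} = -\deg \rH^{2d-k}_{Dol}$, which is conclusive precisely when $k=d$; for $k\neq d$ the pieces do not pair up within the same cohomological degree, so your first argument does not close. Your fallback --- ``a semistable Higgs bundle whose dual is also a semistable Higgs bundle of the same rank has slope $0$'' --- is a non sequitur as stated: the dual of any semistable Higgs bundle is semistable and of the same rank, so this criterion would force every semistable Higgs bundle to have slope zero. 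What you actually get from $\rH^k_{Dol}{}^\vee \cong \rH^{2d-k}_{Dol}$ is again only the antisymmetry of degrees. (One could try to repair this with relative hard Lefschetz to identify $\rH^k_{Dol}$ with $\rH^{2d-k}_{Dol}$, but you neither invoke it nor is it available for merely smooth proper $f$.) The paper's route avoids duality altogether: $\rH^k_{Dol}(X/S)$ is the associated graded of the relative de Rham cohomology $\rH^k_{DR}(X/S)$ for the Hodge filtration (Hodge--de Rham degeneration), the determinant is unchanged under passing to the associated graded, and $\rH^k_{DR}(X/S)$ carries the flat Gauss--Manin connection, so all its rational Chern classes vanish. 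You should replace your duality argument with this one, or restrict your duality argument to the middle degree and supply a separate reason for $k\neq d$.
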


One can prove that all the higher rational Chern classes are zero too, but we won't need it. In view of the preceding result, Theorem \ref{semipositivityI} and Corollary \ref{semipositivityII} are consequences of the following general results.

\begin{prop}\label{semipositivityI for semistable}
Le $S$ be a smooth proper complex variety. Let $(\cE , \theta)$ be a semistable Higgs bundle on $S$ with a vanishing first rational Chern class. If $(\cF, \theta) \subset (\cE, \theta)$ is a Higgs subsheaf, then the line bundle $\det \cF^\vee$ is weakly positive. If moreover $\cF$ is a locally split $\cO_S$-submodule of $\cE$, then the line bundle $\det \cF^\vee$ is nef.
\end{prop}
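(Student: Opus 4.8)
The plan is to reduce the statement to the curve case covered by Theorem \ref{semistability of Gauss-Manin Higgs bundles} / Theorem \ref{geometric origin are semistable}, using the standard fact that weak positivity of a line bundle $L$ on a smooth proper variety $S$ can be tested by restriction to sufficiently general complete intersection curves (obtained by slicing with members of $|mH|$ for $m \gg 0$, after passing to a projective birational model); more precisely, it suffices to show that $\det\cF^\vee \cdot C \geq 0$ for every such curve $C$, together with a uniformity bound ensuring the Seshadri-type estimates combine correctly. Since the notion of semistability in Definition \ref{def semistable} is already closed under pull-back, and since "Higgs bundle of geometric origin" and "vanishing first rational Chern class" are also stable under pull-back to a curve, this reduction is legitimate.

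First I would replace $S$ by a projective birational model and fix an ample $H$. For a very general smooth curve $C \subset S$ cut out by $\dim S - 1$ general members of $|mH|$, the restriction $(\cF_{|C}, \theta_{|C}) \subset (\cE_{|C}, \theta_{|C})$ is a Higgs subsheaf of a semistable Higgs bundle on a curve with $\deg \cE_{|C} = c_1(\cE)\cdot C = 0$. Replacing $\cF_{|C}$ by the saturation of its image, which only increases its degree, semistability of $\cE_{|C}$ gives $\deg \cF_{|C} \leq \deg \cE_{|C} = 0$, hence $\deg(\det\cF^\vee)_{|C} = -\deg\cF_{|C} \geq 0$. Thus $\det\cF^\vee$ is nef in codimension one along a dominating family of curves, and since $\cF$ is torsion-free so $\det\cF^\vee$ is an honest line bundle in codimension one, this already forces $\det\cF^\vee$ to be pseudo-effective; upgrading this to weak positivity over the locus where $\cF$ is locally free is then the content of Viehweg's criterion (a line bundle whose restriction to a very general complete intersection curve has nonnegative degree is weakly positive), via Lemma \ref{Viehweg}.

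For the second assertion, when $\cF$ is a locally split subbundle, $\cF_{|C}$ is already a subbundle of $\cE_{|C}$ for every curve $C$ (not merely a very general one), so the inequality $\deg\cF_{|C} \leq 0$ holds for all smooth proper curves mapping to $S$, directly from Definition \ref{def semistable}. A line bundle on a smooth proper variety that has nonnegative degree on every curve is nef, which gives the claim.

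The main obstacle is the first part: controlling $\det\cF^\vee$ away from a generic curve, i.e. making precise the passage from "nonnegative degree on very general complete intersection curves" to "weakly positive." One has to be careful that $\cF$ need not be saturated and its restriction to $C$ may fail to be saturated, so the degree comparison must be run with the saturation; and one must invoke the correct form of Viehweg's positivity criterion (rather than the naive statement about curves) to conclude weak positivity of a line bundle, handling the birational models and the non-locally-free locus of $\cF$ cleanly. Everything else is formal manipulation of the definitions together with Theorem \ref{geometric origin are semistable}.
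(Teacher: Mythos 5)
Your handling of the second assertion (the locally split case) is correct and is exactly the paper's argument: pull back along an arbitrary morphism from a smooth proper curve, observe that the pullback of $\cF$ is a Higgs subbundle of the degree-zero semistable Higgs bundle, and conclude that $\det\cF^\vee$ has nonnegative degree on every curve, hence is nef. The gap is in the first assertion. You restrict to very general complete intersection curves $C$, correctly obtain $\det\cF^\vee\cdot C\geq 0$ via saturation and semistability, and then appeal to a ``Viehweg criterion'' asserting that a line bundle with nonnegative degree on very general complete intersection curves is weakly positive. No such criterion appears in Lemma \ref{Viehweg}, and the two implications you are compressing into it are both problematic. The implication ``nonnegative degree on movable curves implies pseudo-effective'' is the Boucksom--Demailly--P\u{a}un--Peternell duality theorem, a deep transcendental input that defeats the paper's stated goal of an elementary algebraic proof. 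More seriously, ``pseudo-effective implies weakly positive in Viehweg's sense'' is not automatic: weak positivity of a line bundle $L$ over a dense open $U$ forces the restricted base locus $B_-(L)=\bigcup_{t>0}B(L+tH)$ to lie in the proper closed subset $X\setminus U$, whereas for a pseudo-effective line bundle $B_-(L)$ is a priori only a countable increasing union of proper closed subsets and is known not even to be closed in general (Lesieutre). So the final step of your reduction is an unproven, delicate claim, not a formality.

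The paper sidesteps all of this by reducing the general case to the locally split case rather than to a numerical criterion: after a birational modification $\nu\colon S'\to S$, the image of $\nu^*\cF$ in $\nu^*\cE$ is dominated, via a generically isomorphic map, by a locally split $\cO_{S'}$-submodule $\cG\subset\nu^*\cE$ preserved by the Higgs field; the already-proven nef statement applies to $\det\cG^\vee$, and the generically surjective map $\det\cG^\vee\to\det(\nu^*\cF)^\vee$ together with parts (1) and (3) of Lemma \ref{Viehweg} transfers weak positivity down to $\det\cF^\vee$. I recommend replacing your curve-restriction step by this saturation-plus-birational-modification argument, which uses only tools already available in the paper.
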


\begin{cor}\label{semipositivityII for semistable}
Le $S$ be a smooth proper complex variety. Let $(\cE , \theta)$ be a semistable Higgs bundle on $S$ with a vanishing first rational Chern class. If $\cF \subset \cE$ is a $\cO_S$-submodule of $\cE$ contained in the kernel of $\theta$, then its dual $\cF^\vee$ is weakly positive. If moreover $\cF$ is a locally split $\cO_S$-submodule of $\cE$, then $ \cF^\vee$ is nef.
\end{cor}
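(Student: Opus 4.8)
The plan is to deduce this from Proposition \ref{semipositivityI for semistable} by observing that a $\cO_S$-submodule $\cF \subset \cE$ contained in $\Ker \theta$ is automatically a Higgs subsheaf (it is preserved by $\theta$ in the trivial way, since $\theta(\cF) = 0$), so Proposition \ref{semipositivityI for semistable} applies and tells us that $\det \cF^\vee$ is weakly positive, and nef if $\cF$ is locally split. The remaining work is to upgrade this statement about the determinant line bundle to the same statement about the full sheaf $\cF^\vee$. The key point will be that, because $\theta$ kills $\cF$, \emph{every} sub-line-bundle (more generally every quotient of a symmetric power) of $\cF$ inherits the Higgs-subsheaf property, so the positivity conclusion of Proposition \ref{semipositivityI for semistable} applies not just to $\cF$ but to a whole family of auxiliary subsheaves built from $\cF$.

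Concretely, I would argue as follows. First reduce to the locally free / locally split case: replacing $S$ by a resolution of the closure of the graph of an appropriate rational map (or simply working over the open locus where $\cF$ is locally free and using the definition of weak positivity through birational pullbacks, as set up after Lemma \ref{Viehweg}), we may assume $\cF$ is a subbundle of $\cE$. Now consider the projective bundle $\pi : \bP(\cF) \arrow S$ with tautological quotient $\pi^\ast \cF \twoheadrightarrow \cO_{\cF}(1)$; dually, a sub-line-bundle $\cO_\cF(-1) \hookrightarrow \pi^\ast \cF^\vee$, equivalently $\cO_\cF(-1)^\vee$ is a quotient of $\pi^\ast \cF$, i.e. on $\bP(\cF)$ we have a natural inclusion of the universal sub-line-bundle into $\pi^\ast \cF$. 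Pull back the Higgs bundle $(\cE,\theta)$ to $\bP(\cF)$: it is still semistable (pullback of semistable is semistable, by Definition \ref{def semistable}) with vanishing first rational Chern class, and the universal sub-line-bundle $\cM \subset \pi^\ast\cF \subset \pi^\ast\cE$ is still contained in $\Ker(\pi^\ast\theta)$, hence is a Higgs subsheaf. By Proposition \ref{semipositivityI for semistable} applied on $\bP(\cF)$, the line bundle $\det(\cM)^\vee = \cM^\vee$ is weakly positive, and nef since $\cM$ is a subbundle. But $\cM^\vee$ is precisely $\cO_{\cF^\vee}(1)$ for the projective bundle of one-dimensional quotients of $\cF^\vee$ — after matching up the conventions $\bP(\cF) = \bP(\cF^\vee)$ appropriately — so by Definition \ref{Hartshorne-big} (or rather directly: the tautological quotient line bundle of $\cF^\vee$ is nef/weakly positive) we conclude that $\cF^\vee$ is nef when $\cF$ is a subbundle. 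In the merely-injective case, one runs the same argument over the locus where $\cF$ is locally free and invokes the birational-pullback definition of weak positivity to conclude that $\cF^\vee$ is weakly positive.

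The main obstacle I expect is bookkeeping with the two competing $\bP(-)$ conventions (one-dimensional quotients versus sub-line-bundles) and, more substantively, making sure the vanishing-first-Chern-class hypothesis is genuinely preserved and genuinely used: on $\bP(\cF)$ the Picard group acquires the extra generator $c_1(\cO_\cF(1))$, so one must check that the relevant Higgs subsheaf $\cM$ still has the degree-zero property forced by semistability along curves, rather than accidentally needing a twist. This is where the hypothesis $c_1(\cE) = 0$ (together with semistability controlling the degree of sub-Higgs-sheaves on every curve) does the real work; granting it, the reduction is formal. A cleaner alternative, avoiding projective bundles altogether, is to quote the standard equivalence (Definition \ref{Hartshorne-big}, or the characterization of nefness via $\cO(1)$ on the projectivization) that a vector bundle is weakly positive / nef iff its determinant together with all determinants of its quotients — equivalently the tautological line bundle on its projectivization — are; I would present whichever of these two packagings is shortest in the paper's notation.
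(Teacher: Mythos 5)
Your proposal is correct and follows essentially the same route as the paper: reduce to the locally split case, pass to the projective bundle $\bP(\cF^\vee)$ of one-dimensional subspaces of $\cF$, observe that the tautological sub-line-bundle $\cO_{\cF^\vee}(-1) \subset \pi^\ast\cF \subset \pi^\ast\cE$ is killed by $\pi^\ast\theta$ and hence is a locally split Higgs subsheaf, and apply Proposition \ref{semipositivityI for semistable} to conclude that $\cO_{\cF^\vee}(1)$ is nef. Your worry about the Chern class on the projectivization is moot, since the hypothesis $c_1 = 0$ is only needed for $\pi^\ast\cE$ (where it holds by functoriality), not for the tautological subsheaf.
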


\begin{proof}[Proof of Proposition \ref{semipositivityI for semistable}]
Let $( \cE , \theta)$ be a semistable Higgs bundle with a vanishing first rational Chern class and $(\cF, \theta) \subset (\cE, \theta)$ be a Higgs subsheaf. By definition, for any smooth projective curve $C$ equipped with a morphism $f : C \arrow S$, the pulled-back Higgs bundle $f^\ast (\cE,\theta)$ is semistable. If $\cF$ is a locally split $\cO_S$-submodule of $\cE$, then $f^\ast (\cF, \theta)$ is a Higgs subsheaf of $f^\ast (\cE, \theta)$ and by semistability $f^\ast \det \cF = \det f^\ast \cF \leq \det f^\ast \cE = 0$. Hence $\det \cF^\vee$ is nef. In general, a standard argument using resolution of singularities implies the existence of a birational morphism $\nu : S^\prime \arrow  S$ from a smooth projective variety $S^\prime$ such that $\nu^\ast \cF$ has a morphism to a locally split $\cO_{S^\prime}$-submodule $\cG$ of $\nu^\ast \cE$, which is generically an isomorphism. The Higgs field of $\nu^\ast (\cE, \theta)$ stabilizes $\cG$, hence the line bundle $\det \cG^\vee$ is nef. Since there is a morphism $\det \cG^\vee \arrow \det (\nu^\ast \cF)^\vee $ which is generically an isomorphism, it follows that $\det (\nu^\ast \cF)^\vee = \nu^\ast (\det \cF^\vee) $ and $\det \cF^\vee$ are weakly positive thanks to Lemma \ref{Viehweg}.
\end{proof}

\begin{proof}[Proof of Corollary \ref{semipositivityII for semistable}]
As in the proof of Proposition \ref{semipositivityI for semistable}, it is sufficient to consider the case where $\cF$ is a locally split $\cO_S$-submodule of $\cE$ which is contained in the kernel of the Higgs field $\theta$. Let $\pi : \bP(\cF^\vee) := \Proj_{\cO_S}(\Sym \, \cF^\vee) \arrow S$ be the projective bundle of one-dimensional subspace of $\cF$ and $\cO_{\cF^\vee}(-1) $ be the tautological line bundle which fits in an exact sequence $0 \arrow  \cO_{\cF^\vee}(-1) \arrow  \pi^\ast \cF$. Then $\pi^\ast \cE$ equipped with the induced Higgs field $\pi^\ast \theta$ is a semistable Higgs bundle and $\pi^\ast \cF$ is contained in the kernel of $\pi^\ast \theta$. A fortiori, $\cO_{\cF^\vee}(-1)$ is contained in the kernel of $\pi^\ast \theta$, so that $(\cO_{\cF^\vee}(-1), 0)$ can be viewed as a Higgs subsheaf of $(\pi^\ast \cE, \pi^\ast \theta)$. Since $\cO_{\cF^\vee}(-1)$ is a locally split $\cO_{\bP(\cF^\vee)}$-submodule of $\pi^\ast \cE$, we get from Proposition \ref{semipositivityI for semistable} that $\cO_{\cF^\vee}(-1)^\vee = \cO_{\cF^\vee}(1)$ is a nef line bundle, so that $\cF^\vee$ is nef.
\end{proof}

\begin{rem}
The conclusions of Proposition \ref{semipositivityI for semistable} and Corollary \ref{semipositivityII for semistable} hold more generally for any Higgs bundle with zero rational Chern numbers $c_1(E) = c_2(E) = 0$ which is semistable with respect to an ample divisor. The proof is similar, once we know that these properties are preserved by pull-back (see \cite[Theorem 12]{Langer-inventionnes} for an algebraic proof of this fact).
 \end{rem}

\begin{proof}[Proof of Theorem \ref{geometric origin are semistable}]
Let $(\cE, \theta)$ be a Higgs bundle of geometric origin on a smooth proper variety $S$. Thanks to the base-change formula, the pull-back of a Higgs bundle of geometric origin is also of geometric origin. On the other hand, a locally free coherent $\cO_S$-module has a vanishing first rational Chern class if it has degree zero in restriction to any curve. Therefore it is sufficient to consider the case where the base $S$ is a smooth proper curve.\\

Given an extension $0 \arrow (\cE_1, \theta_1) \arrow (\cE, \theta) \arrow (\cE_2, \theta_2)\arrow 0$ of Higgs bundles on a smooth proper curve, if both $(\cE_1, \theta_1)$ and  $(\cE_2, \theta_2)$ are semistable of degree zero, then $(\cE, \theta)$ is semistable of degre zero, and the converse holds if the extension splits. Therefore, one is reduced to consider the case where $(\cE, \theta) = \rH^i_{Dol}(X/S)$ for a smooth proper morphism $f : X \arrow S$ and an integer $i$. The underlying $\cO_S$-module has degree zero, since it is isomorphic to the graded object associated to the relative De Rham cohomology $\rH^i_{DR}(X/S)$ endowed with its Hodge filtration, and recalling that the relative De Rham cohomology is naturally equipped with a flat connection (the Gauss-Manin connection) so that all its rational Chern classes are zero. Finally, the semistability of the Higgs bundle $ \rH^i_{Dol}(X/S)$ can either be seen as an easy consequence of the computation of the curvature of the Hodge metric  by Griffiths \cite{GriffithsIII}, or it can also be proved algebraically with the help of the nonabelian Hodge theory in positive characteristic of Ogus and Vologodsky, cf. \cite[Theorem 4.21]{Ogus-Vologodsky}.
\end{proof}

\section{Hyperbolicity from Higgs bundles}

\subsection{Weak-positivity of the cotangent bundle}
Let $S$ be a smooth proper complex variety, and let $(\cE, \theta)$ be a Higgs bundle on $S$. The Higgs field $\theta : \cE \arrow \Omega^1_S \otimes_{\cO_S} \cE$ is equivalently seen as a $\cO_S$-linear map of $\cO_S$-modules $\phi : T_S \arrow  \End(\cE)$.

\begin{prop}\label{Weak-positivity of the cotangent bundle}
Assume that the Higgs bundle $(\cE, \theta)$ is semistable with a vanishing first rational Chern class and that the associated $\cO_S$-linear map $\phi : T_S \arrow  \End(\cE)$ is injective. Then the cotangent bundle $\Omega_S^1$ of $S$ is weakly-positive.
\end{prop}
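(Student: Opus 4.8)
The key point is that the injective $\cO_S$-linear map $\phi : T_S \to \End(\cE)$ realizes $T_S$ as a subsheaf of $\End(\cE)$, and $\End(\cE) = \cE^\vee \otimes \cE$ carries the induced Higgs field $\theta_{\End}$. The first thing I would check is that $\phi$ is actually a morphism of Higgs sheaves when $T_S$ is given the zero Higgs field: this is precisely the integrability condition $\theta \wedge \theta = 0$ unwound via the Leibniz rule, so that the image of $\phi$ lands in the kernel of $\theta_{\End}$. Concretely, for local vector fields $v, w$ one has $\theta_{\End}(\phi(v))(w) = [\phi(w),\phi(v)]$ up to sign, which vanishes because $\theta\wedge\theta=0$ forces the $\phi(v)$ to commute with one another. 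Hence $T_S$ embeds as a Higgs subsheaf of $(\End(\cE), \theta_{\End})$ contained in $\ker\theta_{\End}$.

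Next I would verify that $(\End(\cE), \theta_{\End})$ is again semistable with vanishing first rational Chern class, so that Corollary \ref{semipositivityII for semistable} applies to it. Semistability of $\End(\cE)$ is immediate from the definition in \ref{def semistable}: pulling back to a curve $C$, $f^\ast\End(\cE) = \End(f^\ast\cE)$, and the tensor/dual of a semistable Higgs bundle of degree zero on a curve is semistable of degree zero (this is standard; on a curve it follows e.g. from the Narasimhan–Seshadri-type correspondence, or purely algebraically from Langer's results as the remark after Corollary \ref{semipositivityII for semistable} already invokes). The first rational Chern class of $\End(\cE)$ is $\rk(\cE)\cdot c_1(\cE) - \rk(\cE)\cdot c_1(\cE) = 0$ (more simply, $c_1(\cE^\vee\otimes\cE) = 0$ whenever $c_1(\cE)=0$ rationally).

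With these in place the conclusion is essentially immediate: $T_S$ is an $\cO_S$-submodule of the semistable, $c_1$-zero Higgs bundle $\End(\cE)$ contained in $\ker\theta_{\End}$, so Corollary \ref{semipositivityII for semistable} yields that $T_S^\vee = \Omega^1_S$ is weakly positive. (Note one does not get nefness here, because $\phi$ need not be locally split even though it is injective.)

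The main obstacle I anticipate is not conceptual but bookkeeping: making the identification of $\phi$ with a Higgs morphism precise, i.e. checking that the formula $\theta_{\End}(\phi(v)) = \mp[\phi(\sbt),\phi(v)]$ genuinely encodes the condition $\theta\wedge\theta=0$, and keeping track of signs and of the convention $\theta : \cE \to \cE\otimes\Omega^1_S$ versus $\cE \to \Omega^1_S\otimes\cE$. A secondary point worth stating cleanly is the permanence of semistability-with-$c_1=0$ under $\cH om$, which on curves is classical but should be cited rather than reproved. Once these two lemmas are isolated, the proof is a two-line application of Corollary \ref{semipositivityII for semistable}.
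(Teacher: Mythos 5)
Your proposal is correct and follows essentially the same route as the paper: the paper's proof consists exactly of the lemma that $\phi(T_S)$ lies in the kernel of the induced Higgs field $\Theta$ on $\End(\cE)$ (unwinding $\theta\wedge\theta=0$ via $(\Theta_s(\theta_t))(v)=\theta_s(\theta_t(v))-\theta_t(\theta_s(v))=0$), followed by an application of Corollary \ref{semipositivityII for semistable} to $(\End(\cE),\Theta)$ and its $\cO_S$-submodule $T_S$. Your additional verification that $\End(\cE,\theta)$ is again semistable with vanishing first rational Chern class is left implicit in the paper but is indeed required for the corollary to apply, so flagging it (with a citation rather than a reproof) is appropriate.
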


In view of the following lemma, the proposition is a direct consequence of Corollary \ref{semipositivityII for semistable} applied to the Higgs bundle $\End (\cE, \theta)$ and its $\cO_S$-submodule $T_S$.

\begin{lem}
If $\Theta : \End(\cE) \arrow  \Omega_S^1 \otimes_{\cO_S} \End(\cE)$ denotes the induced Higgs field on $\End(\cE)$, then the composition of $\phi : T_S \arrow  \End(\cE)$ with $\Theta$ is zero.
\end{lem}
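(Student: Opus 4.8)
The plan is to unwind the definitions of $\Theta$ and of $\phi$ and to recognise the composite $\Theta \circ \phi$ as a reincarnation of the integrability identity $\theta \wedge \theta = 0$.

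First I would record the two relevant identifications. Since $\cE$ is locally free, the Higgs field $\theta \colon \cE \arrow \Omega^1_S \otimes_{\cO_S} \cE$ is the same datum as a global $\End(\cE)$-valued $1$-form on $S$, i.e. a section of $\Omega^1_S \otimes_{\cO_S}\End(\cE)$; under this identification the map $\phi \colon T_S \arrow \End(\cE)$ is simply the contraction $\phi(v) = \iota_v\theta$ for a local vector field $v$ on $S$. Similarly, the induced Higgs field on $\End(\cE) = \cE^\vee \otimes_{\cO_S}\cE$ is the bracket with $\theta$: for a local section $u$ of $\End(\cE)$ one has $\Theta(u) = [\theta, u] \in \Omega^1_S \otimes_{\cO_S}\End(\cE)$.

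The assertion being local on $S$, I would then choose a local frame and write $\theta = \sum_i \omega_i \otimes A_i$, with $\omega_i$ local $1$-forms and $A_i$ local sections of $\End(\cE)$. Using $\iota_v(\omega_i \wedge \omega_j) = \omega_i(v)\,\omega_j - \omega_j(v)\,\omega_i$ together with a relabelling of summation indices, a short computation yields, for every local vector field $v$,
\[
(\Theta \circ \phi)(v) \;=\; [\theta, \iota_v\theta] \;=\; \sum_{i,j} \omega_i(v)\,\omega_j \otimes [A_j, A_i] \;=\; \iota_v(\theta \wedge \theta),
\]
where $\theta \wedge \theta = \sum_{i,j}\omega_i \wedge \omega_j \otimes A_j A_i$ is the section of $\Omega^2_S \otimes_{\cO_S}\End(\cE)$ whose vanishing is precisely the condition that $(\cE,\theta)$ be a Higgs bundle. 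Since $\theta \wedge \theta = 0$ by hypothesis, the right-hand side vanishes for every $v$, hence $\Theta \circ \phi = 0$.

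There is no genuine obstacle here; the only point that needs care is the bookkeeping of signs and the precise meaning of the symbols $\theta \wedge \theta$, $\Theta$ and $\phi$, so that the displayed chain of equalities holds on the nose. If one prefers a frame-free formulation, the same argument says that $\Theta \circ \phi \colon T_S \arrow \Omega^1_S \otimes_{\cO_S}\End(\cE)$ is tautologically the morphism obtained by contracting the $\End(\cE)$-valued $2$-form $\theta \wedge \theta$ with $T_S$, and it therefore vanishes exactly because $\theta \wedge \theta = 0$.
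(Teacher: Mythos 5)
Your proof is correct and follows essentially the same route as the paper: both identify the induced Higgs field on $\End(\cE)$ as the commutator with $\theta$, so that $\Theta\circ\phi$ becomes the contraction of $\theta\wedge\theta$ (equivalently, the commutator $[\theta_s,\theta_t]$), which vanishes by the integrability condition. The only difference is cosmetic — you compute in a local frame while the paper contracts against two vector fields — so there is nothing to add.
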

\begin{proof}[Proof of the Lemma]
The induced Higgs field $\Theta$ on $\End(\cE)$ satisfies:
$$ (\Theta_s(\Psi))(v) = \theta_s(\Psi(v)) - \Psi(\theta_s(v)) $$
for $\Psi$ a section of $\End(\cE)$, $v$ a section of $\cE$ and $s$ a section of $T_S$.
Letting $\Psi = \theta_t$ for a section $t$ of $T_S$, we obtain 
$$ (\Theta_s(\theta_t))(v) = \theta_s(\theta_t(v)) - \theta_t(\theta_s(v)) = 0 $$
since $\theta \wedge \theta = 0$.
\end{proof}

\subsection{Bigness of the cotangent bundle}

\begin{prop}\label{Bigness of the cotangent bundle}
Let $S$ be a smooth proper complex variety, and let $(\cE, \theta)$ be a Higgs bundle of geometric origin on $S$. Assume that $\cE$ contains a big line bundle (or more generally a $\cO_S$-submodule whose determinant is big). Then the cotangent bundle of $S$ is big in the sense of Hartshorne (hence a fortiori $S$ is of general type).
\end{prop}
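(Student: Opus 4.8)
\textbf{Proof strategy for Proposition \ref{Bigness of the cotangent bundle}.}

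The plan is to relate the bigness of a line bundle sitting inside $\cE$ to the bigness of a line bundle sitting inside some symmetric power of $\Omega^1_S$, and then invoke Definition \ref{Hartshorne-big}(2). Write $\cL \subset \cE$ for the $\cO_S$-submodule with $\det\cL = \cL$ big (in the general case, replace $\cL$ by $\det\cF$ for the submodule $\cF$; this changes nothing in what follows since we only ever use its bigness). First I would consider the iterated Higgs field and the associated filtration: set $\cE_0 = \cE$ and, for the submodule $\cL$, look at the images of $\cL$ under compositions $\theta$, $(\Omega^1_S\otimes\theta)\circ\theta$, etc. Since $\theta\wedge\theta=0$ this produces, after restricting to a suitable dense open set $U$ where all relevant sheaves are locally free and all ranks are locally constant, a flag $\cL = \cL^{(0)} \supset \cL^{(1)} \supset \cdots$ (or rather a sequence of subquotients) together with $\cO_U$-linear maps $\cL^{(j)}/\cL^{(j+1)} \arrow \Omega^1_S \otimes \cL^{(j+1)}/\cL^{(j+2)}$ that are injective on the quotient. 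Let $r$ be the largest integer for which the $r$-fold iterated Higgs field is nonzero on $\cL$; such an $r$ exists and is at most $\mathrm{rk}\,\cE$ because each application strictly drops into a smaller piece, and the composite gives a nonzero map $\cL \arrow (\Omega^1_S)^{\otimes r} \otimes \cE$ whose image generates a line-subbundle-type quotient. More precisely, iterating the last nonzero application produces a nonzero $\cO_U$-linear map $\cL \arrow S^r\Omega^1_S \otimes \cG$ for some quotient $\cG$ of $\cE$, i.e. a nonzero section of $S^r\Omega^1_S \otimes \cG \otimes \cL^\vee$ over $U$.

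The key point is then to control $\cG$. Here is where Theorem \ref{geometric origin are semistable} (via Proposition \ref{semipositivityI for semistable} and its dual form) enters: $(\cE,\theta)$ being of geometric origin, it is semistable with vanishing first rational Chern class, so every quotient Higgs sheaf has a determinant which, dually, is weakly positive — equivalently, $\det\cG$ itself, as a quotient of the determinant of $\cE$ modulo a sub whose dual determinant is weakly positive, is "anti-weakly-positive", and more usefully any quotient bundle $\cG$ of $\cE$ satisfies: $S^m\cG \otimes \cH$ has a section for suitable $m$ once $\cH$ is big. Rather than chase $\cG$ directly, I would instead argue as in the proof of Corollary \ref{semipositivityII for semistable}: pass to $\bP(\cE^\vee)$ or to an appropriate flag bundle so that the quotient $\cG$ is replaced by a line bundle, reduce to the situation of a nonzero map $\cL' \arrow S^r\Omega^1_{S'} \otimes \cM$ with $\cM^\vee$ weakly positive and $\cL'$ big on the modification/bundle $S'$, and conclude that $S^r\Omega^1_{S'}$ receives a nonzero (indeed generically injective) map from the big line bundle $\cL'\otimes\cM^\vee$ — wait, more carefully, from the big sheaf $\cL' \otimes \cM^\vee$, which by Lemma \ref{Viehweg}(4) is Viehweg-big since $\cM^\vee$ is weakly positive and $\cL'$ big. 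By Definition \ref{Hartshorne-big}(3) (the criterion phrased with a Viehweg-big torsion-free sheaf mapping nonzero to a symmetric power), this shows $\Omega^1_{S'}$ is big in the sense of Hartshorne, and then Lemma \ref{Viehweg}(3) together with the definition of Hartshorne-bigness for proper varieties transports this back to $S$.

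\textbf{Main obstacle.} The delicate step is the second one: extracting from "$\cE$ contains a big line bundle" a nonzero map from a \emph{Viehweg-big} sheaf to $S^r\Omega^1_S$, i.e. making sure the quotient factor $\cG$ picked up along the iterated Higgs field does not destroy bigness. The clean way is to observe that $\cG$, being a Higgs quotient of a semistable Higgs bundle of degree zero, is again semistable of degree zero, hence by Proposition \ref{semipositivityI for semistable} applied to its sub-objects (or directly to $\cG^\vee \subset \cE^\vee$, noting $\cE^\vee$ with the dual Higgs field is again of geometric origin up to twist, or simply semistable of degree zero) one gets that $\det\cG^\vee$ is weakly positive; but since we need to kill all of $\cG$ and not just its determinant, the genuinely correct move — and the one I'd commit to — is the projectivization trick from the proof of Corollary \ref{semipositivityII for semistable}, replacing $S$ by the total space of a flag bundle of $\cE$ on which the maximal iterated-Higgs quotient becomes a single line bundle $\cM$ with $\cM^\vee$ nef, pulling $\cL$ back (it stays big after the birational/proper adjustments via Lemma \ref{Viehweg}), and only then forming $\cL\otimes\cM^\vee \hookrightarrow S^r\Omega^1$. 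Verifying that the iterated Higgs construction really yields an injection on a dense open set (so that $r$ is well-defined and the final map is nonzero) is routine given $\theta\wedge\theta=0$ and the injectivity hypothesis is not even needed here — only that $\theta$ is nonzero on $\cL$, which holds because a line subbundle killed by $\theta$ would, by Corollary \ref{semipositivityII for semistable}, have weakly positive dual, contradicting bigness of $\cL$ (as a big line bundle cannot have weakly positive dual on a positive-dimensional variety). That last remark is in fact how one guarantees $r \geq 1$ and gets the induction started.
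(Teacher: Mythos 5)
Your skeleton is the paper's: reduce to the case of a big line bundle $\cF \subset \cE$, iterate the Higgs field to get $\phi_k : \Sym^k T_S \otimes \cF \arrow \cE$, observe that $\phi_1 \neq 0$ (exactly your argument: a line subbundle killed by $\theta$ has weakly positive dual by Corollary \ref{semipositivityII}, contradicting bigness) and that $\phi_k = 0$ for $k \gg 1$ by nilpotence, take the largest $k$ with $\phi_k \neq 0$, and feed the resulting map into Definition \ref{Hartshorne-big}(3) via Lemma \ref{Viehweg}(4). But the step you yourself flag as the main obstacle is resolved incorrectly. You describe the target of the last nonzero iterate as a \emph{quotient} $\cG$ of $\cE$ and propose to control it by semistability of quotients plus a flag-bundle trick. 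Semistability of a degree-zero Higgs bundle gives $\deg \cG \geq 0$ for Higgs \emph{quotients}, i.e. $\det\cG$ weakly positive; that is the wrong sign for your purposes, since you need $\cM^\vee$ (resp.\ $\cG^\vee$) weakly positive, and neither the quotient argument nor its flag-bundle reformulation supplies that. As written, the assertion ``$\cM^\vee$ nef'' for a quotient line bundle extracted this way is unjustified.

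The missing observation, which is the single idea carrying the paper's proof, is that the image $\cN_1$ of the \emph{last} nonzero $\phi_k$ is an $\cO_S$-\emph{submodule} of $\cE$ contained in the kernel of $\theta$: applying $\theta$ once more to $\cN_1$ computes $\phi_{k+1}$ up to the symmetrization, which vanishes by maximality of $k$. Hence Corollary \ref{semipositivityII} applies verbatim to $\cN_1$ and yields that the full dual $\cN_1^\vee$ (not merely its determinant) is weakly positive. Then $\cN_1^\vee \otimes \cF$ is Viehweg-big by Lemma \ref{Viehweg}(4), it admits a nonzero map to $\Sym^k \Omega^1_S$ (dualize the surjection $\Sym^k T_S \otimes \cF \arrow \cN_1$), and Definition \ref{Hartshorne-big}(3) concludes directly; no projectivization or flag bundle is needed, since criterion (3) accepts a Viehweg-big torsion-free sheaf of arbitrary rank. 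A minor further point: in the higher-rank case one cannot simply ``replace $\cL$ by $\det\cF$,'' because $\det\cF$ is not a submodule of $\cE$; the paper first passes to the $r$-th tensor/exterior power of $(\cE,\theta)$, which is again of geometric origin and does contain $\det\cF$ as a sub-line bundle.
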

\begin{rem}
The same proof works more generally for any semistable Higgs bundle with a vanishing first rational Chern classe and whose Higgs field is nilpotent.
\end{rem}
\begin{proof}
Let $(\cE, \theta)$ be a Higgs bundle of geometric origin on $S$ and assume that $\cE$ contains a $\cO_S$-submodule $\cF$ of rank $r$ such that the line bundle $\det \cF$ is big. Since the $r$-th tensor power of  exterior of $(\cE, \theta)$ is also of geometric origin and contains the big line bundle $\det \cF$, one can assume from the beginning that $\cF$ is a line bundle.\\

By iterating the Higgs field $\theta$, we get for every positive integer $k$ a morphism of $\cO_S$-modules $\phi_k : \Sym^k T_S  \otimes \cF \arrow \cE$. Since the Higgs field is nilpotent, the $\phi_k$ are zero if $k \gg 1$. On the other hand, the morphism $\phi_1$ is nontrivial. Indeed otherwise $\cF$ would be contained in the kernel of the Higgs field, hence its dual $\cF^\vee$ would be weakly-positive thanks to Corollary \ref{semipositivityII}, in contradiction with the assumption that $\cF$ is big. If $k$ is the biggest (positive) integer for which $\phi_k$ is nonzero, then its image $\cN_1$ is a $\cO_S$-submodule of $\cE$ contained in the kernel of the Higgs field. Therefore, its dual $\cN_1^\vee$ is weakly-positive thanks to Corollary \ref{semipositivityII}. Since there is a non-zero morphism $  \cN_1^\vee \otimes \cF \arrow \Sym^k \Omega^1_S$ and the $\cO_S$-submodule $\cN_1^\vee \otimes \cF$ is big in the sense of Viehweg, it follows that $\Omega^1_S$ is big in the sense of Hartshorne, cf. Definition \ref{Hartshorne-big}.
\end{proof}

\section{Proofs of Theorem \ref{main result} and Theorem \ref{Arakelov inequality}}

Let $S$ be a smooth proper connected complex variety of dimension $n$. Let $f: X \arrow S$ be a smooth projective morphism with connected fibres of relative dimension $d$ such that the relative canonical bundle $\omega_{X/S}$ is relatively trivial. Consider the Higgs bundle $(\cE, \theta) := \rH^d_{Dol}(X/S)$. Therefore $\cE = \bigoplus_{k = 0}^d \cE^k$ with $\cE^k := \mathrm{R}^{d-k} f_\ast (\Omega_{X/S}^{k})$, so that in particular $\cE^d$ is the Hodge line bundle $f_\ast(\omega_{X/S})$, and $\theta (\cE^k) \subset \cE^{k-1} \otimes_{\cO_S} \Omega_S^1$ for every integer $k$.\\

From now on, we assume that the Hodge line bundle $\cE^d = f_\ast(\omega_{X/S})$ is big. In particular, it follows from Lemma \ref{Infinitesimal Torelli} and Proposition \ref{bigness of the Hodge bundle implies KS injective} that the morphism of $\cO_S$-modules $T_S  \otimes_{\cO_S} \cE^d \arrow \cE^{d-1}$ is injective. A fortiori, the $\cO_S$-linear map $\phi : T_S \arrow  \End(\cE)$ associated to $\theta$ is injective, hence the cotangent bundle $\Omega_S^1$ of $S$ is weakly-positive thanks to Proposition \ref{Weak-positivity of the cotangent bundle}.\\

Since $\cE$ contains the big line bundle $\cE^d$, we know from Proposition \ref{Bigness of the cotangent bundle} that $\Omega_S^1$ is big in the sense of Hartshorne. A fortiori, $S$ is of general type thanks to a general result of Campana-P\u{a}un \cite{Campana-Paun}. But in our situation this can also be seen in an elementary way as follows. As in the proof of Proposition \ref{Bigness of the cotangent bundle}, by iterating the Higgs field, we get for every positive integer $k$ a morphism of $\cO_S$-modules $\phi_k : \Sym^k T_S  \otimes_{\cO_S} \cE^d \arrow \cE^{d-k}$. If we take for $k$ the biggest integer for which $\phi_k$ is nonzero, then its image $\cN_1$ is a $\cO_S$-submodule of $\cE$ contained in the kernel of the Higgs field (note that $k \geq 1$ since $\phi_1$ is injective). Therefore, its dual $\cN_1^\vee$ is weakly-positive thanks to Corollary \ref{semipositivityII}. Since there is a non-zero morphism $  \cN_1^\vee \otimes_{\cO_S} \cE^d \arrow \Sym^k \Omega^1_S$ and the $\cO_S$-submodule $\cN_1^\vee \otimes_{\cO_S} \cE^d$ is big in the sense of Viehweg, we recover that $\Omega^1_S$ is big in the sense of Hartshorne. Consider the exact sequence of coherent $\cO_S$-modules:
\[ 0 \arrow \cN_2 \arrow \Sym^k T_S \arrow \cN_1 \otimes_{\cO_S} (\cE^d)^{\vee} \arrow 0. \]

Taking determinants we get that 
\[\det( \Sym^k \Omega^1_S) = (\cE^d)^{\otimes \rk \cN_1} \otimes_{\cO_S} (\det \cN_1 \otimes \det \cN_2)^{\vee}.\] 

Note that $\cN_2^\vee$ is weakly-positive, since $\Omega_S^1$ is weakly-positive and there is a generically surjective $\cO_S$-linear map $\Sym^k \Omega^1_S \arrow \cN_2^\vee$. A fortiori, the line bundle $(\det \cN_2)^{\vee}$ is weakly-positive. We have also seen that $\cN_1^\vee$ is weakly-positive, therefore $(\det \cN_1)^{\vee}$ is weakly-positive too. Finally, using that $\det( \Sym^k \Omega^1_S) = (\omega_S)^{\otimes C_k}$ with $ C_k = \dbinom{n + k -1}{n}$, we obtain that
\[ (\omega_S)^{\otimes C_k} \geq (\cE^d)^{\otimes \rk \cN_1} .\] 

(Given two line bundles $A$ and $B$ on $S$, the notation $A \geq B$ means that the line bundle $A \otimes_{\cO_S} B^{\vee}$ is weakly-positive.) Using that $\rk \cN_1 \geq 1$ and $\dbinom{n + k -1}{n} \leq \dbinom{n + d -1}{n} $, we have thus proved the following Arakelov inequality
\[ \cE^d \leq  (\omega_S)^{\otimes \dbinom{n + d -1}{n}}. \]
In particular $\omega_S$ is a big line bundle. This finishes the proof of Theorem \ref{main result}.\\

To obtain the slightly better constant in the Arakelov inequality of Theorem \ref{Arakelov inequality}, we can argue as follows. For every integer $k \geq 1$, let $\cG^{d- k}$ denote the image of $\phi_k : \Sym^k T_S \otimes_{\cO_S} \cE^d \arrow \cE^{d- k}$. If we set $\cG^d := \cE^d$ and $\cG := \bigoplus_{k= 0}^d \cG^k$, then $(\cG, \theta)$ is by definition a Higgs subsheaf of $(\cE, \theta)$ (and it is the smallest Higgs subsheaf of $(\cE, \theta)$ containing $\cE^d$). Let as before $r$ denote the biggest integer $k$ such that $\phi_k$  is not zero, so that $\cG^{d-k} = 0$ if $k > r$. Define the $\cK^i$'s so that the following sequence
\[ 0 \arrow \cK^{d-k} \arrow \Sym^k T_S  \arrow \cG^{d- k} \otimes_{\cO_S} (\cE^d)^{\vee} \arrow 0 \]
is exact for every non-negative integer $k$. We know thanks to Theorem \ref{main result} that the cotangent bundle of $S$ is weakly positive. Therefore the dual of $\det  \cK^{d-k}$ is weakly-positive for every positive $k$, and we get
\[ \det \Sym^k T_S  \leq \det \left(\cG^{d- k} \otimes_{\cO_S} (\cE^d)^{\vee} \right) = \det(\cG^{d- k}) \otimes_{\cO_S} \left((\cE^d)^{\vee} \right)^{\otimes \rk(\cG^{d-k})}. \]
Since that this holds also trivially for $k = 0$, we get that
\[ \otimes_{k=0}^r \left( \det ( \Sym^k T_S ) \otimes_{\cO_S} (\cE^d)^{ \otimes \rk(\cG^{d-k})} \right) \leq \otimes_{k=0}^r \det(\cG^{d- k}) = \det \cG. \]
Recall that $\det( \Sym^k \Omega^1_S) = (\omega_S)^{\otimes C_k}$ with $ C_k = \dbinom{n + k -1}{n}$, and that 
\[\sum_{k=0}^r  \dbinom{n+k-1}{n} = \dbinom{n+r}{n +1}.\]
Since $\det \cG \leq 0$ thanks to Proposition \ref{semipositivityI for semistable}, it follows that
\[ (\cE^d)^{\otimes \rk(\cG)} \leq (\omega_S) ^{\otimes \dbinom{n+r}{n+1}}. \]

Note that $\rk(\cG^d) = 1$, $\rk(\cG^{d-1}) = n$ (since $\phi_1$ is injective) and $\rk(\cG^{d-k}) \geq 1$ for any $2 \leq k \leq r$, so that $\rk(\cG) \geq 1 + n + (r-1) = n+ r$. Since the quantity $\frac{1}{n+r} \cdot \dbinom{n+r}{n+1}$ grows with $r$, we finally get that 
\[ (\cE^d)^{d+1} \leq (\omega_S) ^{\otimes \dbinom{n+d}{n+1}}. \]

\section{A few questions}

\subsection{}
For any smooth proper morphism $f : X \arrow S$ with $S$ a smooth projective complex curve, and any integer $k$, the Higgs bundle $\rH^k_{Dol}(X/S)$ is semistable. In fact, using harmonic metrics, Simpson proved that they are even polystable. Find an algebraic proof of this fact.

\subsection{} Find an algebraic proof that the Hodge line bundle is ample on any moduli stack of polarized Calabi-Yau varieties (up to numerical equivalence).

\subsection{} Find an algebraic proof that the Arakelov inequality in Theorem \ref{Arakelov inequality} holds with $C = \frac{d}{2}$.

\bibliographystyle{alpha}
\bibliography{biblio}

\end{document}